\Crefname{counterexample}{counterexample}{counterexamples}
\Crefname{counterexample}{Counterexample}{Counterexamples}
\Crefname{conjecture}{conjecture}{conjectures}
\Crefname{conjecture}{Conjecture}{Conjectures}
\theoremstyle{plain}
\newtheorem{theorem}{Theorem}[section]
\newtheorem{proposition}[theorem]{Proposition}
\newtheorem{lemma}[theorem]{Lemma}
\newtheorem{corollary}[theorem]{Corollary}
\newtheorem{claim}[theorem]{Claim}
\theoremstyle{definition}
\newtheorem{definition}[theorem]{Definition}
\newtheorem{example}[theorem]{Example}
\newtheorem{question}[theorem]{Question}
\newtheorem{conjecture}[theorem]{Conjecture}
\newcommand{\Z}{ \ensuremath{\mathbb{Z}}}
\newcommand{\R}{ \ensuremath{\mathbb{R}}}
\renewcommand{\int}{\operatorname{int}}
\newcommand{\width}{\operatorname{width}}
\newcommand{\area}{\operatorname{area}}
\renewcommand{\vec}[1]{\overrightarrow#1}
\newcommand{\vecline}[1]{\langle \vec #1 \rangle}
\newcommand{\conv}{\ensuremath{\mathrm{conv}}\hspace{1pt}}
\newcommand{\cayley}{\operatorname{Cay}}
\date{\today}
\author[G.~Codenotti]{Giulia Codenotti}
\author[F.~Santos]{Francisco Santos}
\address[G.~Codenotti]
{
Institut f\"ur Mathematik, Freie Universit\"at Berlin, Germany
}
\email{giulia.codenotti@fu-berlin.de}
\address[F.~Santos]
{
Department of Mathematics, Statistics and Computer Science, University of Cantabria, Spain
}
\email{francisco.santos@unican.es}
\thanks{The authors were supported by the Einstein Foundation Berlin under grant EVF-2015-230. 
Work of F. Santos is also supported by project MTM2017-83750-P of the Spanish Ministry of Science (AEI/FEDER, UE)}
\title{Unimodular covers of 3-dimensional parallelepipeds and Cayley sums}
\begin{document}

\begin{abstract}
We show that the following classes of lattice polytopes have unimodular covers, in dimension three: the class of parallelepipeds, the class of centrally symmetric polytopes, and the class of Cayley sums $\cayley(P,Q)$ where the normal fan of $Q$ refines that of $P$. This improves results of Beck et al.~(2018) and Haase et al.~(2008) where the last two classes were shown to be IDP.
\end{abstract}

\maketitle


\section{Introduction}

A lattice polytope $P\subset \R^d$ has the \emph{integer decomposition property} if for every positive integer $n$, every lattice point $p \in nP\cap \Z^d$ can be written as a sum of $n$ lattice points in $P$. We abbreviate this by saying that ``$P$ is IDP''. Being IDP is interesting in the context of  both enumerative combinatorics (Ehrhart theory) and algebraic geometry (normality of toric varieties). It falls into a hierarchy of several properties each stronger than the previous one; see, e.g., \cite[Section 2.D]{BGbook}, \cite[Sect. 1.2.5]{HPPS-survey}, \cite[p. 2097]{mfo2004}, \cite[p. 2313]{mfo2007}.
Let us here only mention that
\[
P \text{ has a unimodular triangulation}\Rightarrow
P \text{ has a unimodular cover}\Rightarrow
P \text{ is IDP.}
\]
Remember that a \emph{unimodular triangulation} is a triangulation of $P$ into unimodular simplices, and a \emph{unimodular cover} is a collection of unimodular simplices whose union equals $P$. 

Oda (\cite{Oda1997}) posed several questions regarding smoothness and the IDP property for lattice polytopes.
Following \cite{HaaseHof, Tsuchiya}, we say that a pair $(P, Q)$ of lattice polytopes has the integer decomposition property, or that \emph{the pair $(P,Q)$ is IDP}, if 
\begin{align*}
\label{eq:mixedIDP}
(P+Q) \cap \Z^d = P \cap \Z^d + Q \cap \Z^d.
\end{align*}
A lattice polytope $Q$ is called \emph{smooth} if it is simple and the primitive edge directions at every vertex form a linear basis for the lattice; equivalently, if the projective toric variety defined by the normal fan of $Q$ is smooth. 
The following versions of Oda's questions are now considered conjectures~\cite{HNPS2008,mfo2007}, and they are open even in dimension three:
\begin{conjecture}
\label{conj:Oda}
\begin{enumerate}
\item 
\label{itm:smoothIDP}
(Related to problems 2 and 5 in \cite{Oda1997})
Every smooth lattice polytope is IDP.
\item 
\label{itm:mixedIDP}
(Related to problems 1, 3, 4, 6 in \cite{Oda1997}) Every pair $(P,Q)$ of lattice polytopes with $Q$ smooth and the normal fan of $Q$ refining that of $P$ is IDP.
\end{enumerate}
\end{conjecture}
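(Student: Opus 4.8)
The plan is to aim for the stronger property in the hierarchy, a \emph{unimodular cover}, since this immediately implies IDP, and to treat the two parts of \Cref{conj:Oda} by a single mechanism. I would first reduce part~(\ref{itm:mixedIDP}) to the absolute statement via the Cayley construction. The only lattice points of $\cayley(P,Q)=\conv\big((P\times\{0\})\cup(Q\times\{1\})\big)$ sit at heights $0$ and $1$, and the lattice points of $n\cdot\cayley(P,Q)$ at height $k$ are exactly those of $(n-k)P+kQ$; hence a unimodular cover of $\cayley(P,Q)$ that respects the height grading would write every lattice point of $(n-k)P+kQ$ as a sum of $n-k$ lattice points of $P$ and $k$ lattice points of $Q$, which for $n=2$, $k=1$ is precisely $(P+Q)\cap\Z^d=(P\cap\Z^d)+(Q\cap\Z^d)$. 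I would use the hypotheses that $Q$ is smooth and that its normal fan refines that of $P$ to endow $\cayley(P,Q)$ with unimodular local structure: at the height-$1$ vertices the vertical edges complete the smooth vertex cones of $Q$ to lattice bases, and the fan-refinement condition is what should force the remaining cells into the same pattern. Thus both parts reduce to covering a polytope whose vertex cones are unimodular, which is the content of part~(\ref{itm:smoothIDP}).

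The core problem is therefore to produce a unimodular cover of a smooth $d$-polytope $P$. Smoothness supplies a strong \emph{local} ingredient: at every vertex $v$ the primitive edge directions form a lattice basis, so the corner simplex on $v$ and its neighbours is unimodular and the whole vertex cone is freely generated. I would try to assemble these corners into a global cover in two stages. First, cover a neighbourhood of $\partial P$ by induction on dimension: every facet of a smooth polytope is again smooth, so by the inductive hypothesis each facet carries a unimodular cover, which one lifts inward by one lattice layer using the smooth corner data. Second, peel $P$ inward, replacing it by the convex hull of the lattice points not yet covered, and iterate, hoping the peeled body retains enough structure to continue.

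The hard part --- and the reason \Cref{conj:Oda} is open even for $d=3$ --- is the passage from this boundary control to the \emph{deep interior}. The smooth vertex cones see only lattice points near vertices and faces; a point lying deep inside $nP$ for large $n$ is invisible to every vertex-local unimodular simplex, and nothing forces the inward-peeling to preserve smoothness or to terminate with unimodular cells. Reconciling the interior lattice points with the boundary cover is the crux on which general attacks stall, and it is exactly what the inductive step cannot supply by itself.

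The realistic outcome of this plan is therefore not a proof of the full conjecture but a diagnosis of where the obstruction concentrates, which suggests replacing the missing interior control by \emph{global} structural hypotheses. That is the route I would actually pursue: isolate families in which symmetry or a fibration structure lets one read the interior lattice points off the boundary, and establish the unimodular cover there, both as evidence for \Cref{conj:Oda} and as a source of techniques for attacking it in general.
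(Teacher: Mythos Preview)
This statement is a \emph{conjecture}, not a theorem, and the paper does not prove it; the authors say explicitly that both parts ``are open even in dimension three.'' So there is no proof in the paper to compare your attempt against. You yourself recognise this when you write that ``the realistic outcome of this plan is therefore not a proof of the full conjecture,'' so in that sense your assessment agrees with the paper's.

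That said, your proposed reduction of part~(\ref{itm:mixedIDP}) to part~(\ref{itm:smoothIDP}) has a concrete gap. You claim that the smoothness of $Q$ together with the fan-refinement condition makes $\cayley(P,Q)$ a polytope ``whose vertex cones are unimodular.'' This is false in general. At a vertex $(v,0)$ with $v$ a vertex of $P$, the maximal cone of the normal fan of $P$ at $v$ is refined into several maximal cones of the normal fan of $Q$; correspondingly, $(v,0)$ is joined by edges to several vertices of $Q\times\{1\}$, in addition to the edges of $P$ at $v$. Such a vertex need not even be simple. A minimal example: take $P$ to be a single point and $Q$ any smooth polygon with at least four vertices; then $\cayley(P,Q)$ is a pyramid whose apex has as many edges as $Q$ has vertices, hence is not simple. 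So part~(\ref{itm:mixedIDP}) does not reduce to part~(\ref{itm:smoothIDP}) via the Cayley construction.

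The paper's actual contribution in this direction is different in kind: rather than reducing to smoothness, it drops smoothness entirely but restricts to $d=2$ (so $\cayley(P,Q)\subset\R^3$), and proves a direct combinatorial lemma (\Cref{lemma:cayley}) guaranteeing a lattice point in one of two strips inside $P$ or $Q$ whenever the parallelogram $p+q$ is non-unimodular. This lemma drives an inductive volume-reduction argument on empty tetrahedra of type $(2,2)$ inside $\cayley(P,Q)$, yielding a unimodular cover (\Cref{thm:cayley}). No smoothness is used, and no attempt is made to attack the general conjecture.
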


When the normal fan of a polytope $Q$ refines that of another polytope $P$, as in the second conjecture, we say that $P$ \emph{is a weak Minkowski summand of $Q$}, since this is easily seen to be equivalent to the existence of a polytope $P'$ such that $P+P' = k Q$ for some dilation constant $k>0$.
This property has the following algebraic implication for the projective toric variety $X_Q$: $P$ is a weak Minkowski summand of $Q$ if and only if the Cartier divisor defined by $P$ on  $X_Q$ is \emph{numerically effective}, or ``nef'' (see~\cite[Cor.~6.2.15, Prop.~6.3.12]{CLS}, but observe that what we here call  ``weak Minkowski summand'' is simply called ``Minkowski summand'' there).

\medskip 
Motivated by these and other questions, several authors have studied the IDP property for different classes of lattice polytopes. 
For example,  very recently
Beck et al.~\cite{BHHHJKM2019} proved that all smooth centrally symmetric $3$-polytopes are IDP.
More precisely, they show that any such polytope can be covered by lattice 
parallelepipeds and unimodular simplices, both of which are trivially IDP.
In \Cref{sec:parallelepipeds} we show:

\begin{theorem}
\label{thm:parallelepipeds}
Every $3$-dimensional lattice parallelepiped has a unimodular cover.
\end{theorem}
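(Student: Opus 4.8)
The plan is to normalise $P$, dispose of a lattice‑width‑one case by an elementary prism argument, and then reduce the problem to covering pyramids over parallelograms — which is where the genuine difficulty sits. For the normalisation: subdividing any one of the three defining segments $[0,u_i]$ at its interior lattice points writes $P$ as a union of parallelepipeds with pairwise disjoint interiors, and a unimodular cover of $P$ is assembled from unimodular covers of the pieces; so I may assume $u_1,u_2,u_3$ are primitive. After a transformation in $GL_3(\Z)$ I may assume $u_2,u_3$ span the coordinate plane $H=\{x_1=0\}$, so that $u_1=(p,q,r)$ with $p\ge 1$ the lattice distance from $u_1$ to $H$; a shear fixing $H$ pointwise lets me also assume $0\le q,r<p$.

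\emph{The case $p=1$.} Here $P$ has lattice width one transverse to $H$. Let $R:=[0,u_2]+[0,u_3]\subset H$ and choose a unimodular triangulation $R=\bigcup_j\tau_j$ (possible, as $R$ is a $2$‑dimensional lattice polygon). Since Minkowski sum distributes over unions, $P=[0,u_1]+R=\bigcup_j(\tau_j+[0,u_1])$, a union of prisms, each over a unimodular triangle in the lattice plane $H$ and extruded by the vector $u_1$, which lies at lattice distance $1$ from $H$. The standard ``staircase'' triangulation splits such a prism into three tetrahedra whose determinants are all $\pm$(the lattice distance from $u_1$ to $H$)$=\pm1$; hence $P$ has a unimodular cover. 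From now on assume $p\ge2$.

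\emph{The eight corner pyramids.} Let $R_0:=[0,u_2]+[0,u_3]$ and $R_1:=u_1+R_0$ be the opposite parallelogram facets of $P$ transverse to $u_1$, so $P=\conv(R_0\cup R_1)$ and the section of $P$ at height $t\in[0,1]$ is the translate $R_0+tu_1$. A direct computation with sections shows
\[
P\;=\;\bigcup_{v\text{ a vertex of }R_1}\conv(R_0,v)\ \cup\ \bigcup_{v\text{ a vertex of }R_0}\conv(R_1,v);
\]
indeed the first four pyramids already cover $\{t\le\tfrac12\}$ and the last four cover $\{t\ge\tfrac12\}$, because at height $t$ such a pyramid meets the slice in a copy of $R_0$ scaled by $1-t$ (resp.\ $t$) and pinned to a corner of $R_0+tu_1$, and four corner‑copies of a parallelogram scaled by $\lambda$ cover the unscaled parallelogram exactly when $\lambda\ge\tfrac12$. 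The central symmetry of $P$ interchanges $R_0$ and $R_1$ and carries the first four pyramids to the last four, so it suffices to cover a pyramid $Q=\conv(R_0,v)$ with $v$ a lattice point at lattice distance $p\ge2$ from $H$. Cutting $R_0$ along a diagonal into two lattice triangles expresses $Q$ as a union of two tetrahedra $T=\conv(\tau,v)$ with $\tau$ a lattice triangle lying in $H$.

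\emph{Covering $T$, and the obstacle.} The tool here is elementary: if a facet $F^\ast$ of $T$ lies in an affine lattice plane at lattice distance $1$ from the opposite vertex of $T$, then coning a unimodular triangulation of the lattice polygon $F^\ast$ with that vertex yields a unimodular triangulation of $T$. The facet $\tau$ is never eligible (its opposite vertex $v$ is at distance $p\ge2$), so $F^\ast$ must be one of the three triangles of $T$ through $v$. In favourable configurations one can choose the diagonal of $R_0$ and the labelling so that an eligible facet appears for each of the two halves — for instance when $q=r=1$, cutting $R_0$ along $[0,u_2+u_3]$ works. But the hard part will be the general apex: for some $v$ \emph{both} diagonals of $R_0$ produce a half that is an empty lattice simplex of normalised volume $\ge2$, and such simplices have no unimodular cover at all, so one cannot merely ``pick the good diagonal''. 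The argument must exploit the constraints $T$ inherits from sitting inside a parallelepiped with $p\ge2$ — that $v$ is a \emph{vertex} of $R_1$ and that $0\le q,r<p$ — and I expect it to proceed by a case analysis on the residues of $q,r$ modulo $p$ (equivalently, on where the projection of $v$ falls inside $R_0$), each case providing either an eligible facet after a suitable cut, or a slicing of $Q$ by an auxiliary lattice plane that allows one to recurse on a parallelepiped, prism, or pyramid of strictly smaller normalised volume. Pinning down the complexity measure that makes this recursion terminate is the crux of the proof.
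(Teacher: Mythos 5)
Your preliminary reductions (passing to primitive edge vectors, the width-one prism case, the decomposition into eight corner pyramids and then into tetrahedra over a diagonal of the base) are all correct, but they stop exactly where the actual difficulty begins, and you say so yourself: you have no mechanism for covering an empty non-unimodular tetrahedron $T\subset P$ by unimodular tetrahedra \emph{contained in $P$ but not in $T$}, nor a complexity measure that makes a recursion terminate. That is the entire content of the theorem, so as it stands the proposal is a reduction to an unsolved problem rather than a proof. Moreover, the proposed case analysis ``on the residues of $q,r$ modulo $p$'' is aimed at the wrong objects: after you triangulate the pyramids, the empty tetrahedra you must handle are essentially arbitrary empty tetrahedra inside $P$ (White's classification shows these can have any volume and any ``type'' $(a,b)$), so no finite list of favourable diagonals of $R_0$ will exhaust them.

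The paper's proof supplies precisely the missing engine, and it does not need any of your normalizations: triangulate $P$ arbitrarily into empty tetrahedra and induct on normalized volume. For an empty non-unimodular $T=\conv(p_1,\dots,p_4)$ one forms the \emph{circumscribed parallelepiped} $C(T)=\conv(p_i,q_i)$ with $q_i=\frac12\sum_j p_j - p_i$, whose four corner tetrahedra $T_i=\conv(q_i,p_j,p_k,p_l)$ together with $T$ triangulate $C(T)$. Two facts combine: (i) every corner tetrahedron of an empty non-unimodular $T$ contains a lattice point other than the $p_i$ (proved via White's classification); and (ii) since a parallelepiped is an intersection of only \emph{three} bands, and any band containing $T$ contains at least three of the four points $q_i$, at least one $T_i$ lies inside $P$. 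Picking a lattice point $u$ in that $T_i$, one retriangulates $\conv(T\cup\{u\})$ as $S_1\cup S_2\cup S_3$ with $S_i=\conv(\{u\}\cup\{p_j: j\ne i\})$; these cover $T$ and each has strictly smaller volume because $p_i$ is the point of $C(T)$ farthest from the opposite facet of $T$. That volume drop is the terminating complexity measure you were looking for. If you want to salvage your outline, you would need to prove an analogue of step (ii) for your pyramids/tetrahedra-over-$\tau$, but note that the paper's argument for (ii) uses in an essential way that $P$ has only three pairs of parallel facets (and Example \ref{ex:non-IDP} shows the statement genuinely fails with four), so the parallelepiped structure cannot be discarded after your reduction.
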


This, together with the mentioned result from~\cite{BHHHJKM2019}, gives:

\begin{corollary}
\label{coro:3cs}
Every smooth centrally symmetric lattice $3$-polytope has a unimodular cover. 
\qed
\end{corollary}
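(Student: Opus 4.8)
The plan is to combine \Cref{thm:parallelepipeds} with the result of Beck et al.~\cite{BHHHJKM2019} quoted above, which asserts that every smooth centrally symmetric lattice $3$-polytope $P$ can be written as a finite union $P = \bigcup_i B_i \cup \bigcup_j S_j$ with each $B_i$ a lattice parallelepiped and each $S_j$ a unimodular simplex. A unimodular simplex is its own unimodular cover, so the only thing to do is to cover each parallelepiped $B_i$ by unimodular simplices, and this is exactly what \Cref{thm:parallelepipeds} supplies.

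Concretely, I would first reduce to the case where every cell in the Beck et al. decomposition is $3$-dimensional: the union of the full-dimensional cells among the $B_i$ and $S_j$ already has the same volume as $P$ (the remaining cells having measure zero), and it is a finite union of closed polytopes, hence closed; a closed subset of $P$ of the same volume as $P$ must equal $P$, so the lower-dimensional cells are redundant and may be discarded. Then I would apply \Cref{thm:parallelepipeds} to each (now full-dimensional) parallelepiped $B_i$, writing $B_i = \bigcup_k T_{i,k}$ as a union of unimodular simplices, so that
\[
P \;=\; \Bigl(\bigcup_{i,k} T_{i,k}\Bigr) \cup \Bigl(\bigcup_j S_j\Bigr)
\]
exhibits a unimodular cover of $P$.

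There is essentially no obstacle in the corollary itself; the content lies entirely in its two inputs. The decomposition into lattice parallelepipeds and unimodular simplices is precisely the theorem of \cite{BHHHJKM2019}, and upgrading an arbitrary $3$-dimensional lattice parallelepiped from ``trivially IDP'' to ``has a unimodular cover'' is the statement of \Cref{thm:parallelepipeds}, proved in \Cref{sec:parallelepipeds}. The only step requiring a moment's care is the reduction to full-dimensional cells above, which is needed only in case the cited decomposition is formally allowed to include degenerate pieces lying in the boundary of $P$.
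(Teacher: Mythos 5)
Your proposal is correct and follows exactly the paper's (one-line) argument: the corollary is obtained by combining the covering of smooth centrally symmetric $3$-polytopes by lattice parallelepipeds and unimodular simplices from \cite{BHHHJKM2019} with \Cref{thm:parallelepipeds}. The extra remark about discarding lower-dimensional cells is harmless but not needed, since a cover (unlike a subdivision) is unaffected by redundant lower-dimensional pieces.
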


These results leave open the following important questions:

\begin{question}
Do $3$-dimensional parallelepipeds have unimodular triangulations?
\end{question}

\begin{question}
Higher dimensional parallelotopes (affine images of cubes) are IDP. Do they have unimodular covers? 
\end{question}

The two-dimensional case of \Cref{conj:Oda}\eqref{itm:mixedIDP} is known to hold, with three different proofs by Fakhruddin~\cite{Fakhruddin}, Ogata~\cite{Ogata} and Haase et al.~\cite{HNPS2008}. This last one actually shows that smoothness of $Q$ is not needed. In dimension three, however, the conjecture fails without the smoothness assumption. Indeed, if we let $P=Q$ be any non-unimodular \emph{empty tetrahedron}, then $P$ is obviously a weak Minkowski summand of $Q$ but the pair $(P.Q)$ is not IDP. By an empty tetrahedron we mean a lattice tetrahedron containing no lattice points other than its vertices (see the proof of \Cref{lemma:corner} for a classification of them).

An alternative approach to \Cref{conj:Oda}\eqref{itm:mixedIDP} is via Cayley sums, which we discuss in  \Cref{sec:cayley}. 
Recall that the \emph{Cayley sum} of two lattice polytopes $P,Q\subset \R^d$ is the lattice polytope
\[
\cayley(P,Q) := \conv(P\times\{0\} \cup Q \times \{1\}) \subset \R^3.
\]
We normally require $\cayley(P,Q)$ to be full-dimensional (otherwise we can delete coordinates) but this does not need $P$ and $Q$ to be full-dimensional. It only requires the linear subspaces parallel to them to span $\R^d$.

As we note in \Cref{prop:mixedIDP}, if the Cayley sum of $P$ and $Q$ is IDP then the pair $(P,Q)$ is IDP.
In particular, the following statement from \Cref{sec:cayley} is stronger than the afore-mentioned result of \cite{Fakhruddin,HNPS2008,Ogata}:

\begin{theorem}
\label{thm:cayley}
Let $Q$ be lattice polygon, and $P$ a weak Minkowski summand of $Q$. Then the Cayley sum $\cayley(P,Q)$ has a unimodular cover.
\end{theorem}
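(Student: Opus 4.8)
The plan is to construct the unimodular cover ``fibered'' over unimodular triangulations of $Q$ and $P$, using the weak--Minkowski--summand hypothesis to keep the fibers unimodular; via the Cayley trick this amounts to producing a unimodular ``mixed cover'' of the pair $(P,Q)$. I would rely on two basic observations. First, for any subdivision $\{Q_1,\dots,Q_N\}$ of $Q$ one has $\cayley(P,Q)=\bigcup_i\cayley(P,Q_i)$, since a point of $\cayley(P,Q)$ at height $t\in(0,1)$ is $(1-t)(p,0)+t(q,1)$ with $q$ in some $Q_i$; the same holds for subdivisions of $P$, and --- crucially --- cutting $P$ by lines parallel to the edges of $Q$ yields pieces that are again weak Minkowski summands of $Q$, so $P$ can be simplified without losing the hypothesis. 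Second, a lattice tetrahedron of $\cayley(P,Q)$ with $k$ vertices at height $0$ and $4-k$ at height $1$ is unimodular exactly when the height-$1$ triangle is unimodular ($k=1$), the height-$0$ triangle is unimodular ($k=3$), or the two edge directions (one in $P$, one in $Q$) form a basis of $\Z^2$ ($k=2$). Fixing unimodular triangulations $T_Q$ of $Q$ and $T_P$ of $P$, the pyramids $\cayley(\{w\},\Delta)$ ($w$ a vertex of $P$, $\Delta\in T_Q$) and $\cayley(\Delta',\{u\})$ ($\Delta'\in T_P$, $u$ a vertex of $Q$) are then automatically unimodular.

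The remaining pieces are tetrahedra of type $(2,2)$, and this is where the hypothesis enters. Because the normal fan of $Q$ refines that of $P$, every edge of $P$ is parallel to some edge of $Q$, and the slices $(1-t)P+tQ$ all carry the normal fan of $Q$ for $t>0$, ``thinning'' to $P$ as $t\to 0$ only along directions of edges of $Q$. Each side of $Q$ in a direction $d$ is split by $T_Q$ into primitive segments, and the triangle of $T_Q$ on such a segment has, by unimodularity, its two other edges in directions each forming a basis of $\Z^2$ with $d$; pairing the edge of $P$ in direction $d$ with one of these companion edges gives a unimodular $(2,2)$-tetrahedron. These are exactly the bridges needed, near $\partial Q$, between the pyramids over $T_Q$ and those over $T_P$. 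Without the hypothesis this breaks down: the Cayley sum of a primitive segment and a unimodular triangle whose edges avoid the segment's direction can be an empty tetrahedron of volume $2$ --- one of the empty tetrahedra classified in the proof of \Cref{lemma:corner} --- which has no unimodular cover.

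The main obstacle I anticipate is the global bookkeeping: choosing coherently which vertex of $P$ (resp.\ $Q$) to cone each triangle of $T_Q$ (resp.\ $T_P$) from, and how the $(2,2)$-tetrahedra link them, so that the pieces actually cover all of $\cayley(P,Q)$. One cannot shortcut this by the naive decomposition $\cayley(P,Q)=\bigcup_{\Delta\in T_Q}\cayley(P,\Delta)$ from the first observation: it is a valid set decomposition, but $P$ need not be a weak summand of the triangle $\Delta$, and then $\cayley(P,\Delta)$ may have no unimodular cover, so the fan relationship between $P$ and $Q$ must be exploited globally rather than piecewise. I would therefore run an induction, simplifying $P$ via the first observation down to the base case of a point (where $\cayley(P,Q)$ is a pyramid and coning a vertex over $T_Q$ suffices), and in the inductive step peeling off a unimodular tetrahedron at a well-chosen corner of $\cayley(P,Q)$ using \Cref{lemma:corner}, then checking that the remaining polytope still meets the hypotheses of the induction. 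Establishing the existence of such a corner and closing the induction is the technical heart of the proof.
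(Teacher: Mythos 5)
There is a genuine gap: your write-up correctly sets up the combinatorics of Cayley tetrahedra (types $(1,3)$, $(3,1)$, $(2,2)$ and their unimodularity criteria) and correctly observes that the naive decomposition $\cayley(P,Q)=\bigcup_{\Delta\in T_Q}\cayley(P,\Delta)$ destroys the weak-summand hypothesis, but the actual argument is never carried out. The pyramids $\cayley(\{w\},\Delta)$ with $w$ a vertex of $P$ and $\Delta\in T_Q$, together with the pyramids $\cayley(\Delta',\{u\})$ and a layer of $(2,2)$-tetrahedra ``near $\partial Q$'', do not obviously cover $\cayley(P,Q)$: a point $(1-t)p+tq$ at height $t$ close to $0$ with $p$ in the interior of $P$ lies in no pyramid $\cayley(\{w\},Q)$, so the bridges would have to fill a full-dimensional region, not just a boundary collar, and you give no mechanism for that. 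Your fallback --- an induction ``simplifying $P$ down to a point'' while ``peeling off a unimodular tetrahedron at a well-chosen corner'' --- is exactly the step you concede is unestablished, and the appeal to \Cref{lemma:corner} is a misfit: that lemma concerns corner tetrahedra of the parallelepiped circumscribed to an empty tetrahedron and says nothing about corners of a Cayley polytope or about preserving the weak-summand hypothesis after peeling.

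For comparison, the paper does not attempt an explicit fibered cover at all. It triangulates $\cayley(P,Q)$ arbitrarily into empty tetrahedra and then covers each empty $(2,2)$-tetrahedron $\cayley(p,q)$ by induction on its normalized volume (\Cref{coro:covercayley}): the engine is \Cref{lemma:cayley}, which asserts that if the parallelogram $p+q$ is not unimodular then one of the open strips $((p_1,p_2)+\vecline q)\cap P$ or $((q_1,q_2)+\vecline p)\cap Q$ contains a lattice point $u$; retriangulating $\conv(\cayley(p,q)\cup\{\tilde u\})$ then replaces $\cayley(p,q)$ by three tetrahedra of strictly smaller volume. All of the work your proposal defers to ``global bookkeeping'' is concentrated in the proof of that strip lemma (Section 4 of the paper), which is where the refinement of normal fans is actually exploited; your proposal contains no statement or proof playing that role, so the theorem is not proved.
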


This has the following two corollaries, also proved in \Cref{sec:cayley}.
A  \emph{prismatoid} is a polytope whose vertices all lie in two parallel facets. 
A polytope has width $1$ if its vertices lie in two \emph{consecutive} parallel lattice hyperplanes. Observe that this is the same as being ($SL(\Z,d)$-equivalent to) a Cayley sum.

\begin{corollary}
\label{coro:prismatoid}
Every smooth $3$-dimensional lattice prismatoid has a unimodular cover.
\end{corollary}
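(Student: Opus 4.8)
The plan is to reduce \Cref{coro:prismatoid} to \Cref{thm:cayley} by showing that, after a unimodular affine change of coordinates, a smooth $3$-dimensional lattice prismatoid $\Pi$ is a Cayley sum $\cayley(P,Q)$ of two lattice polygons with the same normal fan; in particular $P$ is then a weak Minkowski summand of $Q$, and \Cref{thm:cayley} applies directly to $\cayley(P,Q)=\Pi$.

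\emph{Step 1: $\Pi$ has lattice width one between its two bounding facets.} All vertices of $\Pi$ lie in two parallel facets, which are lattice polygons $P$ and $Q$ since $\dim\Pi=3$, and $V(\Pi)=V(P)\sqcup V(Q)$. Choose coordinates so that $P\subset\{z=0\}$ and $Q\subset\{z=h\}$ for a positive integer $h$; then $0\le z\le h$ on $\Pi$, with $\Pi\cap\{z=0\}=P$ and $\Pi\cap\{z=h\}=Q$. Fix a vertex $v$ of $P$. Since $\Pi$ is simple, $v$ lies on exactly three edges of $\Pi$; two of them are the edges of the polygon $P$ at $v$, and the third leaves $P$, hence joins $v$ to a vertex of $Q$; let $\delta\in\Z_{>0}$ be its primitive increment in the $z$-coordinate. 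Smoothness at $v$ says the three primitive edge directions form a $\Z$-basis of $\Z^3$, and expanding the corresponding $3\times 3$ determinant along the last coordinate gives $\pm 1=\delta D$, where $D\neq 0$ is the $2\times 2$ determinant of the two (lattice) edge directions of $P$ at $v$. Hence $\delta=1$. As this holds at every vertex of $P$, the plane $\{z=h\}$ is $\{z=1\}$, so, identifying $P$ and $Q$ with polygons in $\R^2$, we have $\Pi=\cayley(P,Q)$. The same computation shows $v$ has a \emph{unique} edge to $Q$; applying it to the vertices of $Q$ as well, the resulting map $\phi\colon V(P)\to V(Q)$ is injective.

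\emph{Step 2: $P$ and $Q$ have the same normal fan.} It suffices to show every edge direction of $P$ is an edge direction of $Q$. Let $e$ be an edge of $P$ and let $F$ be the unique facet of $\Pi$ other than $P$ containing $e$. Since $P$ is the only facet of $\Pi$ contained in $\{z=0\}$, the facet $F$ has a vertex at height $1$, so $F\cap\{z=1\}$ is a nonempty proper face of $F$, that is, a vertex or an edge (and, similarly, $F\cap\{z=0\}=e$). It cannot be a single vertex $w$: otherwise $F=\conv(e\cup\{w\})$ is a triangle, and then both endpoints of $e$ would have $w$ as their connecting edge's endpoint, i.e.\ would be sent to $w$ by $\phi$, contradicting injectivity. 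So $F\cap\{z=1\}$ is an edge $e'$ of $\Pi$ contained in $Q$, hence an edge of $Q$; and the plane spanned by $F$ (which is not horizontal, as $F$ has vertices at heights $0$ and $1$) meets $\{z=0\}$ and $\{z=1\}$ in two parallel lines containing $e$ and $e'$ respectively, so $e\parallel e'$. Therefore $P$ is a weak Minkowski summand of $Q$, and \Cref{thm:cayley} yields a unimodular cover of $\cayley(P,Q)=\Pi$.

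The substance of the corollary is already contained in \Cref{thm:cayley}; the task here is only the structural reduction, extracting from smoothness alone both the width-one property and the coincidence of the normal fans of the two facets. I expect the one place that requires care to be Step 2, and specifically ruling out that a lateral facet $F$ is a triangle --- precisely the point where the uniqueness of the connecting edges, hence the injectivity of $\phi$, is used.
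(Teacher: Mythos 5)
There is a genuine gap in Step 1, and it is exactly at the sentence ``As this holds at every vertex of $P$, the plane $\{z=h\}$ is $\{z=1\}$.'' Your determinant computation correctly shows that the \emph{primitive direction} of the lateral edge at $v$ has $z$-increment $\delta=1$, but this does not bound the lattice length of that edge: it only says that the edge from $(v,0)$ to $(\phi(v),h)$ passes through a lattice point at every intermediate integer height. A smooth prismatoid need not have width one between its two distinguished facets; for instance the prism $\conv\bigl((0,0),(1,0),(0,1)\bigr)\times[0,2]$ is smooth (the primitive lateral direction is $(0,0,1)$, so $\delta=1$) but has $h=2$ and is not a Cayley sum of its two triangular facets. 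Consequently the reduction ``$\Pi=\cayley(P,Q)$, apply \Cref{thm:cayley}'' does not go through as stated, and Step 2, which is built on the width-one picture, does not repair this.

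What your computation \emph{does} prove is precisely the hypothesis that the paper feeds into its intermediate result, \Cref{prop:prismatoid}: since every lateral edge meets every plane $\{z=i\}$, $1\le i\le h-1$, in a lattice point, each slice $\Pi\cap(\R^2\times\{i\})$ is a lattice polygon. The paper then cuts $\Pi$ into the $h$ slabs $\Pi\cap(\R^2\times[i-1,i])$; each slab is a genuine Cayley sum of two lattice polygons, the intermediate slices all have the normal fan of $Q_1+Q_2$ (so consecutive slices are weak Minkowski summands of one another, and each extreme facet is a weak Minkowski summand of its neighbouring slice), and \Cref{thm:cayley} applies slab by slab. Note also that the paper never needs, and does not claim, that the two bounding facets have the same normal fan --- only the weak Minkowski summand relations between consecutive slices --- so your Step 2 is both unnecessary and unavailable in the general ($h\ge 2$) situation. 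To fix your write-up, replace the conclusion ``$h=1$'' by ``every lateral edge has a lattice point at each intermediate height,'' and then either invoke \Cref{prop:prismatoid} or reproduce its slicing argument.
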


\begin{corollary}
\label{coro:width1}
Every integer dilation $kP$, $k\ge 2$, of a lattice $3$-polytope $P$ of width $1$ has a unimodular cover.
\end{corollary}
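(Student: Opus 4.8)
The plan is to slice $kP$ into $k$ horizontal layers of height one, recognize each layer (up to a lattice translation) as a Cayley sum $\cayley(R,S)$ in which one of $R,S$ is a weak Minkowski summand of the other, and invoke \Cref{thm:cayley}. After an affine unimodular transformation of $\R^3$ we may assume that the width-one direction of $P$ is the last coordinate, so that $0\le z\le 1$ on $P$; this changes neither whether $P$ nor whether any dilate of $P$ has a unimodular cover. Let $A:=\{x\in\R^2:(x,0)\in P\}$ and $B:=\{x\in\R^2:(x,1)\in P\}$ be the two facets; these are lattice polygons (possibly of dimension less than two, but with $A+B$ two-dimensional since $P$ is full-dimensional), and because the vertices of $P$ lie at heights $0$ and $1$ we have $P=\cayley(A,B)$. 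An elementary computation with the defining convex combinations shows that the horizontal slice of $P$ at height $t\in[0,1]$ is the Minkowski combination $(1-t)A+tB$, and that the portion of $P$ in any slab $\{s\le z\le t\}$ equals the convex hull of its two bounding slices. Scaling by $k$, the slice of $kP$ at integer height $j$ is $D_j\times\{j\}$ with $D_j:=(k-j)A+jB$, and the layer $L_j:=kP\cap\{j\le z\le j+1\}$ is the convex hull of $D_j\times\{j\}$ and $D_{j+1}\times\{j+1\}$; that is, $L_j$ is lattice-equivalent to $\cayley(D_j,D_{j+1})$. Since $kP=\bigcup_{j=0}^{k-1}L_j$ and the property of having a unimodular cover is preserved by finite unions and by lattice equivalence, it suffices to produce a unimodular cover of each $\cayley(D_j,D_{j+1})$.

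For this I would compare normal fans. Since $\mathcal N(\lambda A+\mu B)$ equals the common refinement $\mathcal N(A)\wedge\mathcal N(B)$ when $\lambda,\mu>0$, one has $\mathcal N(D_j)=\mathcal N(A)\wedge\mathcal N(B)$ for $0<j<k$, while $\mathcal N(D_0)=\mathcal N(A)$ and $\mathcal N(D_k)=\mathcal N(B)$. In every case one of $\mathcal N(D_j)$, $\mathcal N(D_{j+1})$ refines the other: the two fans are equal for $0<j<k-1$, $\mathcal N(D_1)$ refines $\mathcal N(D_0)$, and $\mathcal N(D_{k-1})$ refines $\mathcal N(D_k)$ (this is where $k\ge 2$ is used: it guarantees that $D_1$ and $D_{k-1}$ are Minkowski combinations of both $A$ and $B$, hence carry the common refinement as their fan). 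Equivalently, one of $D_j,D_{j+1}$ is a weak Minkowski summand of the other; moreover the one whose fan is the finer (or equal) one always has a translate of $A+B$ as a Minkowski summand when $k\ge 2$ --- indeed $D_1=(A+B)+(k-2)A$, $D_{k-1}=(A+B)+(k-2)B$, and for $0<j<k-1$ both $D_j$ and $D_{j+1}$ contain $A+B$ --- and is therefore two-dimensional, i.e. a genuine lattice polygon. Hence, after possibly exchanging the two sides (legitimate since $\cayley(R,S)$ and $\cayley(S,R)$ are lattice-equivalent via $(x,z)\mapsto(x,1-z)$), \Cref{thm:cayley} applies and yields a unimodular cover of $L_j$. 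Uniting these covers over $j$ gives a unimodular cover of $kP$, proving \Cref{coro:width1}.

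The routine ingredients here are the two slice formulas for a Cayley sum, the standard behaviour of normal fans under Minkowski sums and common refinements, and the closure of the covering property under lattice equivalence and finite unions. The point that needs genuine care is the bookkeeping for the two extreme layers $L_0$ and $L_{k-1}$: there the polygons $D_j$ and $D_{j+1}$ have different normal fans, so one must orient the Cayley sum correctly and check that the two-dimensional polygon occupies the ``$Q$'' slot of \Cref{thm:cayley} --- and it is precisely this check that forces $k\ge 2$. (For $k=1$ the only layer is $\cayley(A,B)$, where $\mathcal N(A)$ and $\mathcal N(B)$ may be incomparable, for instance when $P$ is a non-unimodular empty tetrahedron of width one; so the statement really does fail for $k=1$.)
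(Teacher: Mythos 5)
Your proof is correct and follows essentially the same route as the paper: the paper deduces \Cref{coro:width1} from \Cref{prop:prismatoid}, whose proof is exactly your slicing of $kP$ into unit-height layers, each a Cayley sum of consecutive slices $(k-j)A+jB$ with one slice a weak Minkowski summand of the other, followed by an application of \Cref{thm:cayley} to each layer. The only difference is that you inline the prismatoid argument and spell out the normal-fan bookkeeping for the extreme layers explicitly, which the paper leaves more terse.
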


A special case of the latter are integer dilations of empty tetrahedra. That their dilations have
unimodular covers is \cite[Cor.~4.2]{SantosZiegler} (and is also implicit in \cite{KantorSarkaria}).

\medskip
We believe that the $3$-polytopes in all these statements have unimodular triangulations, but this remains an open question.

\subsection*{Acknowledgements:} We would like to thank Akiyoshi Tsuchiya, Spencer Backman, and Johannes Hofscheier for posing these questions to us and 
Christian Haase for helpful discussions.

\section{Parallelepipeds}
\label{sec:parallelepipeds}

The main tool for the proof of \Cref{thm:parallelepipeds} is what we call the parallelepiped circumscribed to a given tetrahedron, defined as follows:

\begin{definition}
\label{def:circunpara}
Let $T$ be a tetrahedron with vertices $p_1$, $p_2$, $p_3$, and $p_4$. Consider the points $q_i= \frac12 (p_1+p_2+p_3+p_4) - p_i$, $i\in [4]$, and let
\[
C(T)=\conv(p_i,q_i: i\in[4]).
\] 
$C(T)$ is a parallelepiped with facets $\conv(p_i, p_j, q_k, q_l)$ for all choices of $\{i,j,k,l\}=[4]$. We call it the \emph{parallelepiped circumscribed} to $T$.

For each $i \in [4]$, let $T_i=\conv(q_i, p_j, p_k, p_l)$, with $\{i,j,k,l\}=[4]$; we call these $T_i$ the \emph{corner tetrahedra} of $C(T)$. Together with $T$ they triangulate $C(T)$.
\end{definition}

Modulo an affine transformation, the situation of $T$ and $C(T)$ is exactly that of the regular tetrahedron inscribed in a cube; see \Cref{fig:circumscribed_parall}. 
\begin{figure}[htb]
\includegraphics[scale=.25]{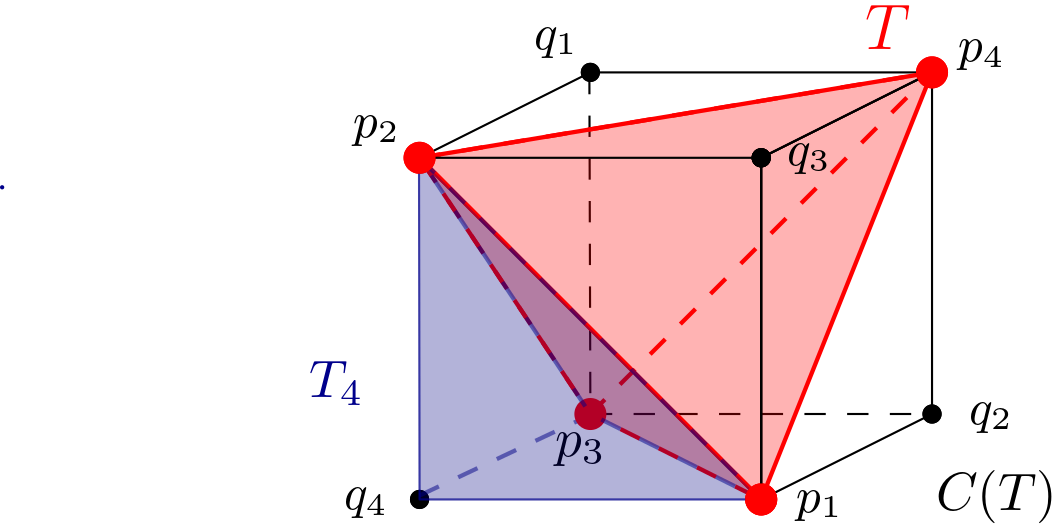}
\caption{In red we have a tetrahedron $T$, in black its circumscribed parallelepiped $C(T)$, and in blue the corner simplex $T_4$.}
\label{fig:circumscribed_parall}
\end{figure}

\begin{lemma}
\label{lemma:corner}
Let $T=\conv\{p_1,p_2,p_3,p_4\}$ be an empty lattice tetrahedron that is not unimodular. Let $C(T)$ be the parallelepiped circumscribed to $T$ and let $T_1, T_2,T_3$ and $T_4$ be the corresponding corner tetrahedra in $C(T)$. Then, every $T_i$ contains at least one lattice point different from $\{p_1,\dots,p_4\}$.
\end{lemma}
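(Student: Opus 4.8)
The plan is to establish and then use the classification of empty lattice tetrahedra to reduce everything to an elementary two‑dimensional fact about lattice triangles. First I would put $T$ in normal form. Every empty lattice $3$‑tetrahedron has lattice width $1$ (a classical theorem of White), so the vertices of $T$ lie in two consecutive lattice hyperplanes; since $T$ is not unimodular these split the four vertices two‑and‑two, because a three‑and‑one split would make the facet spanned by the three an empty — hence unimodular — lattice triangle, forcing $T$ itself to be unimodular. Applying a unimodular affine transformation we may therefore assume $p_1=(0,0,0)$, $p_2=(1,0,0)$, $p_3=(0,0,1)$ and $p_4=(a,b,1)$ with $\gcd(a,b)=1$ (all edges of an empty simplex are primitive), $b\ge 2$ ($b=0$ would make $T$ degenerate and $|b|=1$ would make it unimodular), and, after a shear fixing $p_1,p_2,p_3$ and possibly a reflection, $1\le a\le b-1$ (here $a=0$ is impossible, since $\gcd(0,b)=b\ge 2$). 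As $C(\cdot)$ and the corner tetrahedra are equivariant under unimodular affine maps, it suffices to treat this $T$; for it, $q_i=\bigl(\tfrac{1+a}{2},\tfrac b2,1\bigr)-p_i$.

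If $b$ is even, then $a$ is odd and every $q_i$ is a lattice point; it is a vertex of $T_i$, and it cannot be one of $p_1,\dots,p_4$, since $q_i=p_j$ would force $p_k+p_l=p_i+p_j$ and hence coplanarity of the four vertices. So assume $b$ is odd, hence $b\ge 3$. Then each $T_i$ has exactly three vertices in one of the planes $\{z=0\}$, $\{z=1\}$ and the fourth — always one of the $p_j$ — in the other. Because no lattice point has $0<z<1$ and the slice opposite the triangular facet is a single point, the lattice points of $T_i$ outside $\{p_1,\dots,p_4\}$ correspond bijectively to the lattice points, other than the two integer vertices, of the triangle $G_i$ obtained by projecting that triangular facet to the relevant coordinate plane. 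A direct computation gives
\begin{gather*}
G_3=\conv\{0,(1,0),M\},\qquad G_4=\conv\{0,(1,0),M'\},\\
G_1=\conv\{0,(a,b),M\},\qquad G_2=\conv\{0,(a,b),M''\},
\end{gather*}
where $M=\tfrac12\bigl((1,0)+(a,b)\bigr)$, $M'=\tfrac12\bigl((1,0)-(a,b)\bigr)$ and $M''=\tfrac12\bigl((a,b)-(1,0)\bigr)$; and each $G_i$ has the shape $\conv\{0,B,\tfrac12(B+C)\}$ with $B,C\in\{\pm(1,0),\pm(a,b)\}$ both primitive, $B\ne C$, and $|\det(B,C)|=b\ge 3$. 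So the lemma will follow once I show that any such triangle contains a lattice point other than $0$ and $B$.

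For that I would argue: choose $g\in GL_2(\Z)$ with $gB=(1,0)$; then $gC=(c,\pm N)$ with $N=|\det(B,C)|\ge 3$, and a routine normalization (a reflection and a shear fixing $(1,0)$) brings the triangle to $\conv\{(0,0),(1,0),(\tfrac{c+1}{2},\tfrac N2)\}$ with $1\le c\le N-1$ (the value $c=0$ being excluded because $gC$ is primitive and $N\ge 2$). This triangle contains the lattice point $(1,1)$, whose barycentric coordinates with respect to its three vertices, $\bigl(\tfrac{c-1}{N},\,1-\tfrac{c+1}{N},\,\tfrac2N\bigr)$, are all nonnegative; pulling back by $g^{-1}$ gives a lattice point of the original triangle distinct from $0$ and $B$. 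Applying this to each $G_i$ (with $N=b$) and lifting the point back to $\{z=0\}$ or $\{z=1\}$ produces, for every $i$, a lattice point of $T_i$ outside $\{p_1,\dots,p_4\}$, which is exactly the assertion.

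I expect the real obstacle to be isolating the correct two‑dimensional statement. In the odd case the corner tetrahedra are not lattice simplices — their apices $q_i$ are half‑integral — so a volume argument alone does not apply, and the tempting guess that ``a lattice triangle of normalized area $\ge 3$ with a primitive edge must contain an extra lattice point'' is simply false, as $\conv\{(0,0),(1,0),(\tfrac12,\tfrac b2)\}$ shows. What saves the argument is that the \emph{second} edge $[0,C]$ of each $G_i$ is also primitive, a hypothesis that has to be spotted and used. The remaining ingredients — the width‑$1$ classification, the case $b$ even, the slicing of the $T_i$, and the bookkeeping that the new lattice point of $G_i$ is genuinely outside $\{p_1,\dots,p_4\}$ (which hinges on $M,M',M''$ being non‑integral precisely because $b$ is odd) — are routine.
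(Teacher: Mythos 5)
Your proof is correct. It shares with the paper only the first step, namely the reduction via White's theorem to the normal form $p_1=(0,0,0)$, $p_2=(1,0,0)$, $p_3=(0,0,1)$, $p_4=(a,b,1)$ with $b\ge 2$, $1\le a\le b-1$, $\gcd(a,b)=1$; after that the two arguments genuinely diverge. The paper exhibits the explicit lattice points $u=(1,1,0)\in T_4$ and $v=(0,-1,0)\in T_3$ (using $b\ge 1+a\ge 2$), and then handles $T_1$ and $T_2$ indirectly: since $u+v=p_1+p_2=q_3+q_4$, the quadrilateral $\conv(p_1q_4p_2q_3)$ contains the unimodular parallelogram $\conv(p_1,u,p_2,v)$, hence its translate $\conv(q_2p_3q_1p_4)$ contains a fundamental domain and therefore a lattice point besides $p_3,p_4$, which central symmetry then places in both $\conv(q_2p_3p_4)\subset T_1$ and $\conv(q_1p_3p_4)\subset T_2$. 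You instead treat all four corners uniformly: the case $b$ even is immediate because every $q_i$ is then a lattice point, and for $b$ odd you slice each $T_i$ at heights $0$ and $1$ and reduce to a single two-dimensional lemma about triangles $\conv\{0,B,\tfrac12(B+C)\}$ with $B,C$ primitive and $|\det(B,C)|\ge 3$, proved by normalizing to $\conv\{(0,0),(1,0),(\tfrac{c+1}{2},\tfrac N2)\}$ with $1\le c\le N-1$ and checking that $(1,1)$ has nonnegative barycentric coordinates. Your route is longer but more systematic; it isolates a clean, reusable planar statement, and your closing observation that the primitivity of $C$ is the load-bearing hypothesis (normalized area alone fails, as $\conv\{(0,0),(1,0),(\tfrac12,\tfrac b2)\}$ shows) is exactly right. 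The paper's route is shorter but relies on an ad hoc fundamental-domain and symmetry transfer between opposite corners. Both arguments are complete.
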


\begin{proof}
By White's classification of empty tetrahedra (\cite{White1964}, see also, e.~g.~\cite[Sect.~4.1]{HPPS-survey}), there is no loss of generality in assuming $T=\conv(p_1,p_2,p_3,p_4)$ with
\[
p_1=(0,0,0), \quad
p_2=(1,0,0), \quad
p_3=(0,0,1), \quad
p_4=(a,b,1).
\]
where $b\ge 2$ is the (normalized) volume of $T$, and $a\in \{1,\dots,b-1\}$ satisfies $\gcd(a,b)=1$. This gives 
\begin{align*}
q_1=\left(\frac{1+a}2,\frac{b}2,1\right), &&
q_2=\left(\frac{a-1}2,\frac{b}2,1\right), \quad\\
q_3=\left(\frac{1+a}2,\frac{b}2,0\right), &&
q_4=\left(\frac{1-a}2,-\frac{b}2,0\right).
\end{align*}

Then, the inequalities $b\ge 1+a \ge 2$ imply:
\[
u:=(1,1,0)\in \conv(p_1p_2q_3) \subset T_4, \quad
v:=(0,-1,0)\in \conv(p_1p_2q_4) \subset T_3.
\]

Observe that $u+v=p_1+p_2=q_3+q_4$.
Now, this implies that the quadrilateral $\conv(p_1q_4p_2q_3)$ contains a fundamental domain for the lattice $\Z^2\times\{0\}$. Hence, its translate $\conv(q_2p_3q_1p_4)$ contains a fundamental domain for $\Z^2\times\{1\}$ and, in particular, it contains at least one lattice point other than $p_3$ and $p_4$. By central symmetry around its center $\left(\frac{a}2,-\frac{b}2,1\right)$, $\conv(q_2p_3q_1p_4)$ must contain lattice points in both triangles $\conv(q_2p_3p_4)\subset T_1$ and $\conv(q_1p_3p_4)\subset T_2$.
%


\end{proof}

\begin{lemma}
\label{lemma:3<4}
Let $P$ be a lattice parallelepiped and let $T\subset P$ be a tetrahedron. Then, at least one of the four corner tetrahedra $T_i$ of the circumscribed parallelogram $C(T)$ is fully contained in $P$.
\end{lemma}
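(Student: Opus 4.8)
The plan is to reduce the whole statement to the unit cube and then argue one coordinate at a time. First, observe that the construction of $C(T)$, of the corner tetrahedra $T_i$, and the relation ``$T_i\subset P$'' are all preserved by invertible affine maps: indeed each vertex $q_i=\frac12(p_1+p_2+p_3+p_4)-p_i$ is an affine combination of $p_1,\dots,p_4$, so an affine map sends the circumscribed parallelepiped of $T$ to the circumscribed parallelepiped of the image. Hence there is no loss of generality in assuming $P=[0,1]^3$. Second, note that $P$ contains $p_j,p_k,p_l$ (the three vertices of $T$ other than $p_i$) for every $\{i,j,k,l\}=[4]$, so by convexity $T_i=\conv(q_i,p_j,p_k,p_l)\subset P$ if and only if $q_i\in P$. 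Therefore it suffices to prove that at least one of the four points $q_1,q_2,q_3,q_4$ lies in $[0,1]^3$.

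Because $[0,1]^3$ is a product of intervals, I would test this containment coordinate by coordinate. Fix $c\in\{1,2,3\}$, write $a_i=(p_i)_c\in[0,1]$ and $S=a_1+a_2+a_3+a_4$; then $(q_i)_c=\frac S2-a_i$, so the ``bad set'' $B^{(c)}:=\{\,i\in[4]:(q_i)_c\notin[0,1]\,\}$ is exactly $\{\,i:a_i>\frac S2\ \text{or}\ a_i<\frac S2-1\,\}$. The key claim is that $|B^{(c)}|\le 1$ for every coordinate $c$. Granting it, the union $B^{(1)}\cup B^{(2)}\cup B^{(3)}$ has at most three elements, so some index $i\in[4]$ is good for all three coordinates, and then $q_i\in[0,1]^3=P$, which completes the proof.

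The heart of the matter — and essentially the only computation involved — is the claim $|B^{(c)}|\le 1$; I expect this short case analysis to be the only real obstacle, and it is settled using just $a_i\in[0,1]$: if two distinct indices satisfied $a_i,a_j>\frac S2$, then $a_i+a_j>S$, which is impossible since the remaining two coordinates are $\ge 0$; if two satisfied $a_i,a_j<\frac S2-1$, then $a_i+a_j<S-2$, impossible since the remaining two are $\le 1$; and if $a_i>\frac S2$ while $a_j<\frac S2-1$, then $a_i-a_j>1$, impossible since $a_i\le 1\le a_j+1$. Hence no two indices lie in $B^{(c)}$ simultaneously. The one point requiring care is the bookkeeping of strict versus weak inequalities in this step. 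Finally, I note that this argument uses neither that $P$ is a lattice polytope nor any hypothesis on $T$, only that $P$ is a parallelepiped containing the tetrahedron $T$.
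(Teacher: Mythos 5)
Your proof is correct, and it is essentially the paper's argument in normalized coordinates: the paper phrases the parallelepiped as an intersection of three ``bands'' $f^{-1}([\alpha,\beta])$ and shows each band containing $T$ excludes at most one $q_i$, then applies the same pigeonhole over the three bands; your reduction to $[0,1]^3$ just makes those three bands the coordinate slabs, and your case analysis with $S=a_1+a_2+a_3+a_4$ is an equivalent way of verifying the same key claim. Your closing observation that neither the lattice structure nor any hypothesis on $T$ is used also matches the paper, whose proof of this lemma is purely affine.
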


\begin{proof}
Let us denote the vertices of $T$ by $p_1, p_2, p_3, p_4$ and the vertices of $C(T)$ not in $T$ by $q_1, q_2, q_3, q_4$, with the conventions of \Cref{def:circunpara}. 

We call \emph{band} any region of the form $f^{-1}([\alpha,\beta])$ for some functional $f\in (\R^3)^*$ and closed interval $[\alpha,\beta]\subset \R$.
We claim that any band containing $T$ must contain at least three of the $q_i$s. 
This claim implies that the parallelepiped $P$, which is the intersection of three bands, contains at least one of the $q_i$s and hence it fully contains the corresponding $T_i$.

To prove the claim, suppose that $q_1\not\in B:= f^{-1}([\alpha,\beta])$ for a certain band $B \supset T$. 
Without loss of generality, say $f(q_1)<\alpha$. Then the equalities $q_1+q_2=p_3+p_4$ and $q_1+p_1=q_2+p_2$ respectively give:
\begin{gather}
\label{eq:first}
f(q_2) = f(p_3+p_4-q_1) = f(p_3)+f(p_4)-f(q_1) > 2\alpha-\alpha=\alpha,\\
\label{eq:second}
f(q_2) = f(q_2+p_2-p_1) = f(q_2)+(f(p_2)-f(p_1)) < \alpha + (\beta-\alpha) = \beta,
\end{gather}
so that $q_2 \in B$.

Inequality \eqref{eq:first} also implies
\begin{equation}
\label{eq:third}
f(q_1) <  f(p_i) < f(q_2), \quad\text{ for $i=3,4$}.
\end{equation}

The translation of vector $\frac12 (p_1+p_2-p_3-p_4)$ sends $q_1,q_2,p_3,p_4$ to $p_2,p_1,q_4,q_3$ (in this order). By applying this to inequality \eqref{eq:third}, we obtain
\[
\alpha \le f(p_2) < f(q_i) < f(p_1) \le \beta, \quad\text{ for $i=3,4$},
\]
so that $q_3,q_4 \in B$.
This finishes the proof of the claim, and of the lemma.
\end{proof}

\begin{corollary}
\label{coro:coverpara}
Let $T$ be an empty lattice tetrahedron contained in a lattice parallelepiped $P$. Then, $T$ can be covered by unimodular tetrahedra contained in $P$.
\end{corollary}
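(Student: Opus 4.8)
The plan is to establish, by strong induction on the normalized volume $N$ of $T$, the following more general statement, which contains the corollary as a special case: \emph{every lattice tetrahedron contained in a lattice parallelepiped $P$ can be covered by unimodular tetrahedra contained in $P$.} The base case $N=1$ is trivial, since $T$ is then unimodular and $\{T\}$ is the required cover. For the inductive step, suppose $N\ge 2$. If $T$ contains a lattice point $p$ distinct from its four vertices, then $p$ lies in the relative interior of a unique proper face of $T$, and pulling $T$ at $p$ subdivides it into finitely many lattice tetrahedra, each contained in $T\subseteq P$ and of normalized volume strictly less than $N$; the inductive hypothesis applied to each of these settles this case. So we may assume that $T$ is an \emph{empty} lattice tetrahedron; since $N\ge 2$ it is not unimodular, and we may invoke \Cref{lemma:corner,lemma:3<4}.

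By \Cref{lemma:3<4}, after relabelling the vertices $p_1,p_2,p_3,p_4$ of $T$ we may assume that the corner tetrahedron $T_4=\conv(q_4,p_1,p_2,p_3)$ of $C(T)$ is contained in $P$; by \Cref{lemma:corner}, $T_4$ contains a lattice point $r\notin\{p_1,p_2,p_3,p_4\}$. The crux is the following claim: \emph{$T$ is covered by the three lattice tetrahedra $T^{(ij)}:=\conv(p_4,r,p_i,p_j)$ with $\{i,j\}\subset\{1,2,3\}$, each of which is contained in $\conv(T\cup\{r\})\subseteq P$ and has normalized volume at most $N-1$.} Granting the claim, the inductive hypothesis applies to each $T^{(ij)}$, and the union of the three resulting covers is a unimodular cover, by tetrahedra contained in $P$, of $T^{(12)}\cup T^{(13)}\cup T^{(23)}\supseteq T$, which completes the induction.

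For the claim I would argue as follows. Since $T$ is empty, each of its facets is an empty lattice triangle, hence unimodular. Write $F_k$ for the facet of $T$ opposite $p_k$, and let $\phi_k$ be the primitive integral linear functional that vanishes on $\operatorname{aff}(F_k)$ and is positive at $p_k$; then $\phi_k(p_k)$ is the normalized volume of $T$ divided by the normalized area of $F_k$, which is $1$ since $F_k$ is unimodular, so $\phi_k(p_k)=N$. From $q_4=\tfrac12(p_1+p_2+p_3-p_4)$ one computes $\phi_k(q_4)=N/2$ for $k\in\{1,2,3\}$ and $\phi_4(q_4)=-N/2$; hence, on the four vertices of the simplex $T_4$, the functional $\phi_k$ (for $k\le 3$) takes the values $N/2$, $N$, $0$, $0$ at $q_4$, $p_k$ and the two remaining $p_j$, so $\phi_k$ attains its maximum $N$ over $T_4$ only at $p_k$, while $\phi_4\le 0$ throughout $T_4$. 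Now the facet $\conv(p_4,p_i,p_j)$ of $T^{(ij)}$ is the unimodular triangle $F_k$ with $\{k\}=\{1,2,3\}\setminus\{i,j\}$, so the normalized volume of $T^{(ij)}$ equals $\phi_k(r)$; since $r\in T_4$ is a lattice point different from $p_k$, this value is at most $N-1$, and it is at least $1$ because $\phi_k(r)=0$ would place $r$ on an edge of $T$ — a primitive segment — forcing $r\in\{p_1,p_2,p_3\}$. For the covering part, note that $\conv(T\cup\{r\})=\conv(\{p_4,r\}\cup F_4)$; as $\phi_4(r)<0<\phi_4(p_4)$, the segment $[p_4,r]$ crosses $\operatorname{aff}(F_4)$, and a short computation in barycentric coordinates with respect to $T$ — using $r\in T_4$ and the primitivity of the edges of $T$ to rule out the boundary cases — shows that the crossing point lies in the relative interior of $F_4$. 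Since the axis $[p_4,r]$ of the bipyramid $\conv(\{p_4,r\}\cup F_4)=\conv(T\cup\{r\})$ pierces the interior of its triangular base $F_4$, that bipyramid is triangulated by the three tetrahedra obtained by coning $[p_4,r]$ over the three edges of $F_4$, which are precisely the $T^{(ij)}$; in particular $T\subseteq\conv(T\cup\{r\})=T^{(12)}\cup T^{(13)}\cup T^{(23)}$.

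The step I expect to demand the most care is the covering half of the claim — in particular, verifying that $[p_4,r]$ pierces the \emph{relative} interior of $F_4$, which is what makes the coning a genuine triangulation of the bipyramid rather than a mere covering with overlapping or lower-dimensional pieces. The volume bound, by contrast, comes out cleanly, using only that the facets of an empty tetrahedron are unimodular triangles together with the explicit values of the functionals $\phi_k$ at the vertices of the corner tetrahedron $T_4$.
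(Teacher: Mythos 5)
Your proof is correct and follows essentially the same route as the paper's: both invoke \Cref{lemma:3<4} and \Cref{lemma:corner} to produce a lattice point $r$ in a corner tetrahedron of $C(T)$ contained in $P$, and then cover $T$ by the three smaller tetrahedra $\conv(p_4,r,p_i,p_j)$ (the paper's $S_1,S_2,S_3$), concluding by induction on normalized volume. The only differences are organizational --- you strengthen the inductive statement to arbitrary lattice tetrahedra instead of re-triangulating the pieces into empty tetrahedra at each step, and you spell out in more detail than the paper why the volumes strictly drop and why the segment $[p_4,r]$ pierces the relative interior of $\conv(p_1,p_2,p_3)$ so that the three pieces genuinely cover $T$.
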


\begin{proof}
We proceed by induction on the (normalized) volume of $T$, which is a positive integer. If this volume equals $1$ then $T$ is unimodular and there is nothing to prove, so we assume $T$ is not unimodular. Let $p_1, p_2, p_3, p_4$ denote the vertices of $T$.

\Cref{lemma:3<4} guarantees that one of the corner tetrahedra $T_i$ of the parallelepiped $C(T)$ is contained in $P$. Without loss of generality, suppose $T_4 = \conv(p_1, p_2, p_3,q_4)$ is in $P$. By \Cref{lemma:corner}, we know that $T_4$ contains a lattice point other than the $p_i$s, which we denote by $u$. 
%
Then $S=\conv(T\cup \{u\})$ can be triangulated in two different ways: $S=T \cup T'_4$, where $T'_4 = \conv(p_1, p_2, p_3, u) \subseteq T_4$ and $S= S_1 \cup S_2 \cup S_3$, with
\[
S_1= \conv(p_2,p_3,p_4, u),
S_2=\conv(p_1,p_3,p_4, u),
S_3=\conv(p_1,p_2,p_4, u).
\]

Each of the tetrahedra $S_i$ has lattice volume strictly smaller than $T$ because, for each $i$, $p_i$ is the unique point of $C(T)$ maximizing the distance to the opposite facet $\conv(p_j,p_k,p_l)$ of $T$. Thus, $S_1$, $S_2$ and $S_3$ cover $T$ and have volume strictly smaller than $T$. The $S_i$ may not be empty, but we can triangulate them into empty tetrahedra, which by inductive hypothesis they can be covered unimodularly.
%
%
\end{proof}

\begin{proof}[Proof of \Cref{thm:parallelepipeds}]
Arbitrarily triangulate the parallelepiped into empty lattice tetrahedra and apply \Cref{coro:coverpara} to these tetrahedra.
%
%
\end{proof}


Let us say that a lattice $3$-polytope $P$ \emph{has the circumscribed parallelepiped property} if it satisfies the conclusion of \Cref{lemma:3<4}:  ``for every empty tetrahedron $T$ contained in $P$ at least one of the four corner tetrahedra in $C(T)$ is 
contained in $P$''. 
If this holds then $P$ has a unimodular cover, since then the proofs of \Cref{coro:coverpara} and \Cref{thm:parallelepipeds} work for $P$.
Hence, a positive answer to the following question would imply that every smooth $3$-polytope has a unimodular cover, which in turn implies \Cref{conj:Oda}\eqref{itm:smoothIDP} in dimension three.

\begin{question}
Does every smooth 3-polytope have the circumscribed parallelepiped property? 
\end{question}

Our proof that parallelepipeds have the property (\Cref{lemma:3<4}) is based on the fact that they have only three (pairs of) normal vectors. The proof, and the property of being IDP, fail if there are four of them:

\begin{example}[Non-IDP octahedron and triangular prism]
\label{ex:non-IDP}
The following lattice octahedron $Q$ and triangular prism $P$ are not IDP:
\begin{gather}
Q= \conv((0,1,1),(1,0,1),(1,1,0),(0,-1,-1),(-1,0,-1),(-1,-1,0)),\\
P=\conv((0,1,1),(1,0,1),(1,1,0),(-1,0,0),(0,-1,0),(0,0,-1)).
\end{gather}

Indeed, in both polytopes the only lattice points are the six vertices and the origin. The point $(1,1,1)$ lies in the second dilation but is not the sum of two lattice points in the polytope. Hence, they are not IDP, which implies they do not admit unimodular covers.
\end{example}

\section{ Cayley sums}
\label{sec:cayley}
Let $P$ and $Q$ be two lattice polytopes in $\R^d$. We do not require them to be full-dimensional, but we assume their Minkowski sum is. Remember that the \emph{Minkowski sum} $P+Q$ and the \emph{Cayley sum} of $P$ and $Q$ are defined as:
\begin{gather*}
P + Q := \{ p+q \in \R^d: p\in P, q \in Q\} \subset \R^d,\\
\cayley (P,Q) = \conv( P\times\{0\} \cup Q\times \{1\}) \subset \R^{d+1}.
\end{gather*}

The so-called \emph{Cayley Trick} is the isomorphism
\[
2\cayley(P, Q) \cap (\R^d\times \{1\}) \cong P+Q,
\]
which easily implies:

\begin{proposition}[see, e.g.~\protect{\cite[Thm.~0.4]{Tsuchiya}}]
\label{prop:mixedIDP}
If $\cayley(P,Q)$ is IDP then the pair $(P,Q)$ is mixed IDP.
\end{proposition}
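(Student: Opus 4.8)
The plan is to unwind the Cayley Trick explicitly and track lattice points through it. Suppose $\cayley(P,Q)$ is IDP; I want to show $(P+Q)\cap\Z^d = P\cap\Z^d + Q\cap\Z^d$, the nontrivial inclusion being ``$\subseteq$''. So take a lattice point $w\in(P+Q)\cap\Z^d$. Under the isomorphism $2\cayley(P,Q)\cap(\R^d\times\{1\})\cong P+Q$, the point $w$ corresponds to $(w,1)\in 2\cayley(P,Q)\cap\Z^{d+1}$: indeed, writing $w=p+q$ with $p\in P$, $q\in Q$, we have $(w,1) = (p,0)+(q,1)$, and both $(p,0)$ and $(q,1)$ lie in $\cayley(P,Q)$, so $(w,1)\in 2\cayley(P,Q)$; it is a lattice point because $w\in\Z^d$. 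Hence $(w,1)$ is a lattice point in the second dilate of the IDP polytope $\cayley(P,Q)$.

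By the IDP hypothesis, $(w,1) = x_1 + x_2$ for two lattice points $x_1,x_2\in\cayley(P,Q)\cap\Z^{d+1}$. The key structural observation is that every lattice point of $\cayley(P,Q)$ has last coordinate $0$ or $1$: this is because $\cayley(P,Q)$ lies between the hyperplanes $\{x_{d+1}=0\}$ and $\{x_{d+1}=1\}$, and moreover lattice points on these two hyperplanes are exactly $(P\cap\Z^d)\times\{0\}$ and $(Q\cap\Z^d)\times\{1\}$ respectively — any lattice point of $\cayley(P,Q)$ in the hyperplane $\{x_{d+1}=1\}$ is in the face $Q\times\{1\}$, hence of the form $(q,1)$ with $q\in Q\cap\Z^d$, and similarly for $\{x_{d+1}=0\}$. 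Since the last coordinates of $x_1$ and $x_2$ are nonnegative integers summing to $1$, one of them (say $x_1$) has last coordinate $0$ and the other has last coordinate $1$. Therefore $x_1=(p,0)$ with $p\in P\cap\Z^d$ and $x_2=(q,1)$ with $q\in Q\cap\Z^d$, and projecting to the first $d$ coordinates gives $w = p+q\in P\cap\Z^d + Q\cap\Z^d$, as desired.

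The main obstacle, such as it is, lies in justifying the ``key structural observation'' cleanly — namely that lattice points of a Cayley sum live only in the two bounding hyperplanes and that on those hyperplanes they are exactly the lattice points of $P$ and of $Q$. This is a standard fact about the combinatorics of $\cayley(P,Q)$ (its faces in $x_{d+1}=0$ and $x_{d+1}=1$ are precisely $P\times\{0\}$ and $Q\times\{1\}$, and there are no lattice points strictly between two consecutive integer hyperplanes), so it can be dispatched in a sentence or two, but it is the only place where one uses something beyond formal manipulation. I would present the argument in the order above: recall the Cayley Trick isomorphism; lift $w$ to $(w,1)\in 2\cayley(P,Q)$; apply IDP; split the two summands by their last coordinate; identify each with a lattice point of $P$ or $Q$; conclude. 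No induction or case analysis on dimension is needed, and the proof works verbatim in all dimensions.
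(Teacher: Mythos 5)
Your proof is correct and is exactly the standard unwinding of the Cayley Trick that the paper alludes to when it says the isomorphism $2\cayley(P,Q)\cap(\R^d\times\{1\})\cong P+Q$ ``easily implies'' the proposition (the paper gives no further detail, citing Tsuchiya instead). Your ``key structural observation'' is fine as stated: the intersections of $\cayley(P,Q)$ with the hyperplanes $x_{d+1}=0$ and $x_{d+1}=1$ are precisely the faces $P\times\{0\}$ and $Q\times\{1\}$, so the parity-of-last-coordinate argument goes through with no gap.
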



The Cayley Trick also provides the following canonical bijections:

\[
\begin{array}{ccc}
\text{polyhedral subdivisions of $\cayley(P,Q)$} &\leftrightarrow& \text{mixed subdivisions of $P + Q$}\\
\text{triangulations of $\cayley(P,Q)$} &\leftrightarrow& \text{fine mixed subdivisions of $P + Q$}\\
\text{unimodular simplices in $\cayley(P,Q)$} &\leftrightarrow& \text{unimodular prod-simplices in $P + Q$}.
\end{array}
\]
See \cite{DLRS2010} for more details on the Cayley Trick and on triangulations and polyhedral subdivisions of polytopes.
In fact these bijections can be taken as definitions of the objects in the right-hand sides. In particular, 
we call \emph{prod-simplices} in $P+Q$ the Minkowski sums $T_1+T_2$ where $T_1\subset P$ and $T_2\subset Q$ are simplices with complementary affine spans. A prod-simplex is \emph{unimodular} if the edge vectors from a vertex of $T_1$ and from a vertex of $T_2$ form a unimodular basis.

\medskip

We now turn our attention to $d=2$, in order to prove \Cref{thm:cayley}. A triangulation of $\cayley(P,Q)\subset \R^3$ consists of tetrahedra of types $(1,3)$, $(2,2)$ and $(3,1)$, where the type denotes how many vertices they have in $P$ and in $Q$. Empty tetrahedra of types $(1,3)$ or $(3,1)$, which are Cayley sums of a triangle in $P$ and a point in $Q$, or viceversa, are automatically unimodular. The case that we need to study are therefore tetrahedra of type $(2,2)$, which are Cayley sums of a segment $p\subset P$ and a segment $q\subset Q$.
The following lemma, whose proof we postpone to \Cref{sec:the_lemma}, is crucial to understand how to unimodularly cover these tetrahedra. 
We use the following conventions: if $a, b$ are points, we denote by $[a,b]$ and $(a,b)$ respectively the closed and open segments with endpoints $a,b$. Given a segment $s=[a,b]$, we denote the vector $\vec s:= b-a$ and  the line spanned by $\vec s$ by $\vecline s$.

\begin{lemma}
\label{lemma:cayley}
Let $Q$ be a two-dimensional lattice polytope and $P$ a weak Minkowski summand of it. 
Let $p=[p_1,p_2] \subset P$ and $q=[q_1,q_2]\subset Q$ be two primitive and non-parallel lattice segments, and let $\vecline p$ and $ \vecline q$ be the lines spanned by them.  If the parallelogram $p + q$ is not unimodular, then at least one of the regions
\[
((p_1, p_2) + \vecline q ) \cap P, 
\qquad \text{and} \qquad
((q_1, q_2) + \vecline p ) \cap Q
\]
contains a lattice point. See \Cref{fig:strips}.
\end{lemma}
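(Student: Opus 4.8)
The plan is to reduce the statement to a two-dimensional lattice-point counting argument, using the weak Minkowski summand hypothesis to control the shape of $P$ relative to $Q$. First I would observe that the non-unimodularity of the parallelogram $p+q$ means its normalized area is some integer $m\ge 2$; equivalently, writing $\vec p$ and $\vec q$ as primitive vectors, the index of the sublattice they generate in $\Z^2$ is $m$. The strip $S_q := ((p_1,p_2) + \vecline q)$ is the open parallelogram-like slab swept by translating the open segment $(p_1,p_2)$ along the full line $\vecline q$; similarly for $S_p$. I want to show that at least one of $S_q\cap P$ or $S_p \cap Q$ contains a lattice point.

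The key idea is to exploit that $P$ is a weak Minkowski summand of $Q$, so there is a polytope $P'$ and $k>0$ with $P + P' = kQ$. In particular every edge direction of $P$ is an edge direction of $kQ$, hence of $Q$, so the normal fan of $Q$ refines that of $P$. This lets me compare the ``widths'' of $P$ and $Q$ in the two relevant directions, namely the directions normal to $\vec p$ and normal to $\vec q$. Concretely, I would consider the width of $P$ in the direction $\vec q^{\perp}$ and the width of $Q$ in the direction $\vec p^{\perp}$: since $p\subset P$ and $q\subset Q$ are actual segments, the lattice width of $P$ across $\vecline q$ is at least the lattice distance between the two lines through $p_1$ and $p_2$ parallel to $\vecline q$, and symmetrically for $Q$. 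The product of these two quantities should be bounded below by something like $m$, the index, because $m$ measures precisely the "mismatch" of the two lattice directions. If both widths were $1$, the parallelogram $p+q$ would be unimodular, a contradiction; so at least one width is $\ge 2$, and a width-$\ge 2$ slab bounded by two lattice lines with a lattice segment inside forces an interior lattice point by a one-dimensional pigeonhole / Pick-type argument, which is exactly a point of $S_q\cap P$ or $S_p\cap Q$.

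More precisely, the step I would carry out is: normalize coordinates (via $SL_2(\Z)$) so that $\vec p = (1,0)$, hence $\vec q = (a, m)$ with $\gcd(a,m)=1$ and $0\le a <m$ after a shear; then $p = [p_1, p_1+(1,0)]$ lies in $P$ and $q=[q_1,q_1+(a,m)]$ lies in $Q$. The slab $S_q\cap P$ contains a lattice point iff $P$ meets one of the lattice lines strictly between the two lines $\{x=\text{(first coord of }p_1)\}$-type translates… — here I should instead use lines parallel to $\vecline q$. I would argue that if $S_q\cap P = \emptyset$ then $P$ is "thin" across $\vecline q$, which via $P+P'=kQ$ pushes all the non-unimodularity into $Q$, forcing $Q$ to be "wide" across $\vecline p$, i.e. $Q$ must cross at least one intermediate lattice line parallel to $\vecline p$ inside the open segment interval, producing the desired point of $S_p\cap Q$. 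Central symmetry or convexity of $Q$ then places the point in the open slab, not just on its boundary.

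The main obstacle I expect is making the "width transfer" between $P$ and $Q$ precise: the relation $P+P'=kQ$ controls edge directions but one must carefully track lattice widths and the constant $k$, and ensure the counting gives a lattice point in the \emph{open} strip rather than on its boundary line (where it could coincide with $p_1$ or $p_2$). Handling the boundary case is where I would likely need the full strength of the hypothesis that $p$ and $q$ are \emph{primitive} and that $P$, $Q$ are honest lattice polytopes (not just their edge fans), together with a Pick's-theorem bookkeeping on the parallelogram $p+q$ of area $m$; I would organize this so that the two strips together "see" all $m$ sublattice cosets, and absence of a lattice point in both strips forces $m=1$.
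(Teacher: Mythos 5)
There is a genuine gap, and it sits exactly at the step your whole plan hinges on. You propose the dichotomy ``if both widths were $1$, the parallelogram $p+q$ would be unimodular; so at least one width is $\ge 2$, and a width-$\ge 2$ slab bounded by lattice lines forces an interior lattice point.'' But both relevant widths are automatically equal to $m=\area(p+q)\ge 2$: the lattice width of the segment $p$ with respect to the primitive functional vanishing on $\vec q$ is exactly the normalized area of $p+q$, and symmetrically for $q$. So there is no dichotomy to exploit --- both open strips are ``wide'' and both contain lattice points (indeed entire lattice lines) in their interiors. The difficulty, which your pigeonhole step silently skips, is that the lattice point must lie in $P$ (resp.\ $Q$), and $P$ can be as small as the primitive segment $p$ itself, in which case $P\cap((p_1,p_2)+\vecline q)$ is the open segment $(p_1,p_2)$ and contains no lattice point at all. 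The lemma is genuinely asymmetric: when $P$ fails to reach a lattice point in its strip, one must prove that $Q$ does, and this is where all the work lies.

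Your second half (``if $S_q\cap P=\emptyset$ then $P$ is thin, which pushes the non-unimodularity into $Q$\dots'') names the right phenomenon but gives no mechanism for it, and in particular never uses the normal-fan-refinement hypothesis in a checkable way --- yet the statement is false without it (take $P=p$ and $Q=q$ non-parallel primitive segments spanning a non-unimodular parallelogram). The paper's proof is a contradiction argument that makes this precise: assuming neither strip catches a lattice point, the boundary of $Q$ must contain two primitive edges $b,t$ crossing the strip $q+\vecline p$ and the boundary of $P$ two edges $\ell,r$ crossing $p+\vecline q$, each of lattice width at least $w=\area(p+q)$ in the transverse direction; a sequence of claims pins down the ``slopes'' of $b$, $t$, $r$, and the contradiction comes from the fact that $Q$ must contain an edge that is a translate of the edge $r$ of $P$ (this is where refinement of normal fans enters), while the region of $Q$ where that translate would have to sit is too narrow to hold it. To complete your proposal you would need to supply this entire convexity-plus-edge-direction analysis; the area/width bookkeeping you describe does not substitute for it.
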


\begin{figure}[htb]
\scalebox{.75}{\input{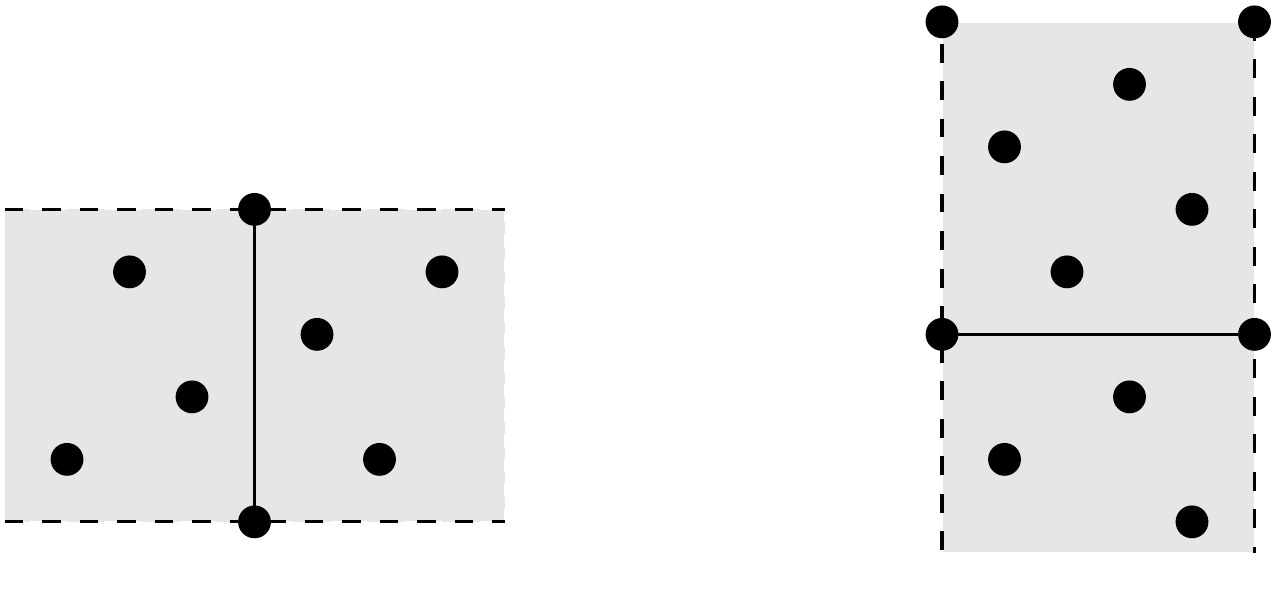_t}}
\caption{The strips  of Lemma \ref{lemma:cayley}}
\label{fig:strips}
\end{figure}

\begin{corollary}
\label{coro:covercayley}
Let $T$ be an empty lattice tetrahedron contained in the Cayley sum $\cayley(P,Q)$, where $Q$ is a lattice polygon and $P$ is a weak Minkowski summand of $Q$. Then, $T$ can be covered by unimodular tetrahedra contained in $\cayley(P,Q)$.
\end{corollary}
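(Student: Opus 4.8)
\textbf{Plan for the proof of Corollary~\ref{coro:covercayley}.}

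The plan is to mirror the structure of the proof of \Cref{coro:coverpara}, doing induction on the normalized volume of $T$, with \Cref{lemma:cayley} playing the role that \Cref{lemma:corner} and \Cref{lemma:3<4} played in the parallelepiped case. If $T$ is unimodular there is nothing to do, so assume it is not. First I would dispose of the easy tetrahedron types: an empty tetrahedron of type $(1,3)$ or $(3,1)$ in $\cayley(P,Q)$ is the Cayley sum of a triangle and a point, hence automatically unimodular (as noted in the text just before the lemma), so it does not occur among non-unimodular $T$. Thus the only case is a tetrahedron $T$ of type $(2,2)$, which under the Cayley correspondence is $\cayley(p,q)$ for a lattice segment $p=[p_1,p_2]\subset P$ and a lattice segment $q=[q_1,q_2]\subset Q$; after subdividing we may take $p$ and $q$ primitive, and since $T$ is not unimodular the parallelogram $p+q$ is not unimodular, and $p,q$ are not parallel. (If $p$ and $q$ were parallel the ``parallelogram'' would be degenerate and $T$ would not be full-dimensional, so this cannot happen.)

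Next, apply \Cref{lemma:cayley}: at least one of the two strip regions contains a lattice point; say without loss of generality that $((q_1,q_2)+\vecline p)\cap Q$ contains a lattice point $w$. Translating $w$ along $\vecline p$ we may assume $w$ lies in the relatively open band between the two lines $p_1+\vecline q$... — more precisely, I would choose $w$ in that strip so that $\cayley(p,[q_1,w])$ and $\cayley(p,[w,q_2])$ both have normalized volume strictly smaller than that of $T=\cayley(p,q)$; this is the analogue of the ``$p_i$ is the unique maximizer of the distance to the opposite facet'' argument, and here it follows simply from the fact that $w$ lies strictly between the supporting lines of $q_1$ and $q_2$ in the direction $\vecline q$ transverse to $\vecline p$, so the two sub-segments $[q_1,w]$ and $[w,q_2]$ each have lattice length, measured in the quotient by $\vecline p$, strictly less than that of $q$. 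Since $w\in Q$, both sub-polytopes $\cayley(p,[q_1,w])$ and $\cayley(p,[w,q_2])$ are contained in $\cayley(P,Q)$, their union is $\cayley(p,q)=T$ (they subdivide it), and each has strictly smaller volume. They may fail to be empty, but we triangulate each into empty lattice tetrahedra, all still contained in $\cayley(P,Q)$ and of strictly smaller volume, and invoke the inductive hypothesis to cover each of them by unimodular tetrahedra inside $\cayley(P,Q)$. This covers $T$ and completes the induction.

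The main obstacle I expect is the bookkeeping in the second paragraph: verifying cleanly that one can pick the lattice point $w$ in the open strip so that \emph{both} resulting sub-tetrahedra strictly decrease in volume, and confirming that $\cayley(p,[q_1,w])\cup\cayley(p,[w,q_2])$ really is a subdivision of $T$ (rather than merely a cover with overlap or with a gap). Under the Cayley trick the volume of $\cayley(p,q)$ is, up to normalization, the mixed area of $p$ and $q$, i.e.\ the area of the parallelogram $p+q$, and the point $w$ lying strictly between the two $\vecline p$-translates through $q_1$ and $q_2$ forces each of $p+[q_1,w]$ and $p+[w,q_2]$ to have strictly smaller area; translating $w$ within its strip, if necessary, ensures $w$ genuinely separates $q_1$ from $q_2$ along $q$, which is exactly what makes $[q_1,w]$ and $[w,q_2]$ a subdivision of $q$ and hence $\cayley(p,[q_1,w]),\cayley(p,[w,q_2])$ a subdivision of $\cayley(p,q)$. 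Once these geometric facts are pinned down, the induction runs exactly as in \Cref{coro:coverpara}.
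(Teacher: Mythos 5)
Your overall strategy (induction on normalized volume, reduction to type $(2,2)$, and using \Cref{lemma:cayley} to produce a lattice point that splits $T$ into smaller pieces) matches the paper's, but the covering step has a genuine gap. You propose to cover $T=\cayley(p,q)$ by the two tetrahedra $\cayley(p,[q_1,w])$ and $\cayley(p,[w,q_2])$; for their union to contain $T$ you would need $[q_1,w]\cup[w,q_2]$ to be a subdivision of the segment $q=[q_1,q_2]$, i.e.\ you would need $w\in(q_1,q_2)$. That is impossible: $q$ is primitive, so its relative interior contains no lattice point, and \Cref{lemma:cayley} only places $w$ in the open \emph{strip} $(q_1,q_2)+\vecline p$, in general off the affine line spanned by $q$. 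No ``translation of $w$ within its strip'' can repair this, since translating along $\vec p$ by a non-integer multiple destroys integrality and by an integer multiple never lands on $(q_1,q_2)$. Concretely, at heights close to $1$ the union of your two tetrahedra shrinks onto the bent path $[q_1,w]\cup[w,q_2]$ and misses an open sliver of $T$ near the middle of $q\times\{1\}$; the fact that the two normalized volumes sum to that of $T$ does not give a cover, because the two pieces also stick out of $T$ on the other side of that bent path.

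The paper closes exactly this gap with a third tetrahedron coming from a bistellar flip. Setting $s=\conv(w,q_1,q_2)\subset Q$, the five-vertex polytope $\cayley(p,s)$ has two triangulations: one consisting of $T=\cayley(p,q)$ together with $\cayley(\{p_i\},s)$, and one consisting of $\cayley(p,[q_1,w])$, $\cayley(p,[w,q_2])$ and $\cayley(\{p_j\},s)$, where $\{i,j\}=\{1,2\}$ is chosen so that $[w,p_i]$ crosses the triangle $\conv(q_1,q_2,p_j)$ --- which is precisely what $w$ lying in the open strip guarantees. Hence $T$ is contained in the union of the three tetrahedra of the second triangulation: your two pieces, which have strictly smaller volume and are handled by induction, plus $\cayley(\{p_j\},s)$, which is of type $(1,3)$ and therefore triangulates into empty, hence unimodular, tetrahedra inside $\cayley(P,Q)$. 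With this third piece added (and the volume bookkeeping you already did), your argument goes through and coincides with the paper's proof up to the symmetric choice of which strip contains the lattice point.
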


\begin{proof}
The proof is by induction on the normalized volume of $T$, which we assume to be at least $2$. This implies that $T$ is of type $(2,2)$,
since empty tetrahedra of types $(1,3)$ and $(3,1)$ are unimodular. Thus, $T$ is the Cayley sum of primitive segments $p=[p_1,p_2]\subset P$ and $q=[q_1,q_2]\subset Q$.  
Let $u$ be the lattice point whose existence is guaranteed by \Cref{lemma:cayley}. Assume  (the other case is similar) that 
\[
u \in ((p_1, p_2) + \vecline q ) \cap P,
\]
and call $t$ the triangle $t=\conv( u,  p_1,  p_2)\subset P$.

Let us denote $\tilde u$, $\tilde p_1$, $\tilde p_2$, $\tilde q_1$, $\tilde q_2$ the points corresponding to $u, p_1, p_2, q_1, q_2$ in $\cayley(P,Q)$.
That is, $\tilde p_i = p\times\{0\}$, $\tilde q_i = p\times\{1\}$, and $\tilde u = u\times\{0\}$.
Observe that the assumption $u\in((p_1, p_2) + \vecline q$ implies that of the segments $[u,q_i]$ crosses the triangle $\conv(p_1,p_2,q_j)$, where $\{i,j\}=\{1,2\}$, see \Cref{fig:flip}. 

\begin{figure}[htb]
\includegraphics[scale=.3]{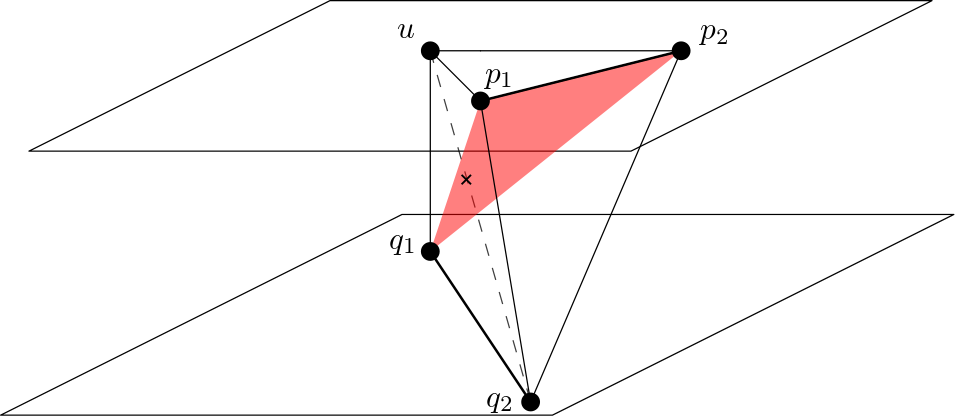}
\caption{$[u,q_2]$ intersects $\conv(p_1,p_2,q_1)$}
\label{fig:flip}
\end{figure}

In turn, this means that the polytope $\conv(\tilde u, \tilde p_1, \tilde p_2, \tilde q_1, \tilde q_2) = \cayley(t,q)$
 has the following two triangulations:
\begin{gather*}
\mathcal T^+:= \left\{ \cayley(p,q), \cayley(t, \{q_i\}) \right\},
\\
\mathcal T^-:= \{ \cayley([p_1,u],q), \cayley([p_2,u],q), \cayley(t, \{q_j\}) \}.
\end{gather*}
The tetrahedra $\cayley(t, \{q_j\})$ and $\cayley(t, \{q_i\})$ are unimodular, which implies that $T=\cayley(p,q)$ has volume equal to the sum of the volumes of $\cayley([p_1,u],q)$ and $\cayley([p_2,u],q)$. In particular, we have covered $T$ by the three tetrahedra in $\mathcal T^-$, which are of smaller volume and hence have unimodular covers by inductive assumption.
%
%
\end{proof}

\begin{proof}[Proof of \Cref{thm:cayley}]
Arbitrarily triangulate $\cayley(P,Q)$ into empty lattice tetrahedra and apply \Cref{coro:covercayley} to these tetrahedra.
\end{proof}

Let us now show how to derive \Cref{coro:prismatoid,coro:width1} from this theorem.
\emph{Prismatoids} were defined in~\cite{Santos-hirsch} as polytopes whose vertices all lie in two parallel facets. In particular, a \emph{lattice prismatoid} is any $d$-polytope $SL(\Z,d)$-equivalent to one of the form
\[
\conv(Q_1\times\{0\} \cup Q_2 \times \{k\}),
\]
where $Q_1,Q_2$ are lattice $(d-1)$-polytopes and $k\in \Z_{>0}$. This is almost a generalization of Cayley sums, which would be the case $k=1$, except the definition of prismatoid requires $Q_1$ and $Q_2$ to be full-dimensional, while the Cayley sum only requires this for $Q_1+Q_2$.

\begin{proposition}
\label{prop:prismatoid}
Let $Q_1$, $Q_2$ be two lattice polygons and consider the prismatoid 
\[
P:= \conv(Q_1\times\{0\} \cup Q_2 \times \{k\},
\]
with $k\ge 2$. 
If $P\cap(\R^2\times\{1\})$ is a lattice polygon then $P$ has a unimodular cover.
\end{proposition}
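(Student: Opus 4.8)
The plan is to deduce \Cref{prop:prismatoid} from \Cref{thm:cayley} by cutting the prismatoid $P$ into slabs of width $1$ between consecutive horizontal lattice hyperplanes and realizing each slab as a Cayley sum. Write $R_i := P\cap(\R^2\times\{i\})$ for $i=0,1,\dots,k$; by hypothesis each $R_i$ is a lattice polygon (this is given for $i=1$, is automatic for $i=0$ and $i=k$ since $R_0=Q_1$, $R_k=Q_2$, and for intermediate $i$ it follows because $R_i$ is sandwiched, in a sense to be made precise, between $R_1$ and $R_k$; alternatively, one observes that $R_i$ is a weighted Minkowski combination of $Q_1$ and $Q_2$ and that the integrality propagates). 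The slab $S_i := P\cap(\R^2\times[i,i+1])$ is then affinely a Cayley polytope $\cayley(R_i,R_{i+1})$ after the shear sending $\R^2\times\{i,i+1\}$ to $\R^2\times\{0,1\}$, which is in $SL(\Z,3)$. Since $P = \bigcup_{i=0}^{k-1} S_i$, it suffices to unimodularly cover each $S_i$.

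To apply \Cref{thm:cayley} to $S_i \cong \cayley(R_i,R_{i+1})$ I need one of $R_i, R_{i+1}$ to be a weak Minkowski summand of the other, i.e.\ their normal fans must be comparable. The key structural fact is that the slices $R_i$ of a prismatoid interpolate linearly between $Q_1$ and $Q_2$: explicitly $R_i$ is (a lattice polygon sandwiched between) $\frac{k-i}{k}Q_1 + \frac{i}{k}Q_2$ in the sense that $\frac{k-i}{k}Q_1 + \frac{i}{k}Q_2 \subseteq R_i$ and these coincide when $P$ is actually a prism over a common refinement; in general $R_i = \{x : \langle u, x\rangle \le \max(\tfrac{k-i}{k}h_1(u) + \tfrac{i}{k}h_2(u),\ \dots)\}$ where the $h_j$ are support functions. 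The clean statement to extract is: the normal fan of $R_{i+1}$ refines that of $R_i$ whenever $i+1 \le k/2$, and the normal fan of $R_i$ refines that of $R_{i+1}$ whenever $i \ge k/2$ — intuitively, as one moves toward the ``more refined'' end the slices only gain edges. More carefully, for consecutive slices one compares $\cayley(R_i,R_{i+1})$ with $\cayley(R_{i-1},R_i)$ and uses that these Cayley polytopes are themselves faces/sections of $P$; I would phrase it so that for \emph{each} $i$, one of the two inclusions of normal fans holds, pick the direction that does, and invoke \Cref{thm:cayley} with the appropriate polygon in the role of $Q$.

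The main obstacle I anticipate is precisely this normal-fan comparability of consecutive slices — it is the only nontrivial geometric input, and one must be careful because it can genuinely fail for a pair of \emph{non-consecutive} slices if $P$ is not ``monotone'', but it should hold for consecutive ones (or, failing a slickness, one may need to argue that at the ``turning level'' near $k/2$ the relevant slice dominates, or subdivide once more). I would handle it by the support-function description above: an edge of $R_i$ with outer normal $u$ is forced to persist in $R_{i+1}$ (resp.\ $R_{i-1}$) as long as the piecewise-linear function $i \mapsto h_{R_i}(u)$ does not change its linear piece across that step, and between two consecutive integer levels this is governed by which of the two ``parent'' support values $h_1(u), h_2(u)$ is active — giving exactly one refinement direction per step. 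Once the comparability is in hand, the rest is bookkeeping: each $S_i$ has a unimodular cover by \Cref{thm:cayley}, and the union of these covers over $i = 0,\dots,k-1$ is a unimodular cover of $P$, since the $S_i$ tile $P$ and each unimodular tetrahedron lies inside its slab, hence inside $P$. \qed
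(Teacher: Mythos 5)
Your overall strategy is exactly the paper's: cut $P$ into the slabs $S_i = P\cap(\R^2\times[i,i+1])$, check that each slab is a Cayley sum of lattice polygons with comparable normal fans, and apply \Cref{thm:cayley} to each. The gap is in the one nontrivial step, the normal-fan comparability of consecutive slices, which you flag as ``the main obstacle'' and then do not actually establish. Your proposed route to it is wrong in its details: the slice $R_i$ is not merely \emph{sandwiched between} weighted combinations of $Q_1$ and $Q_2$, it \emph{equals} $\frac{k-i}{k}Q_1+\frac{i}{k}Q_2$ exactly (a point of $P$ at height $i$ is a convex combination of points of $Q_1\times\{0\}$ and $Q_2\times\{k\}$ whose weights on the two groups are forced to be $\frac{k-i}{k}$ and $\frac{i}{k}$), so its support function is exactly $\frac{k-i}{k}h_1+\frac{i}{k}h_2$ --- not a maximum, and there is no question of ``which parent is active'': both always are. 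Consequently, for every $i$ with $0<i<k$ the normal fan of $R_i$ is the common refinement of the fans of $Q_1$ and $Q_2$, i.e.\ the normal fan of $Q_1+Q_2$, \emph{independently of $i$}. Your dichotomy at $i=k/2$, the worry about a ``turning level'', and the concern about non-monotone prismatoids are all artifacts of the mistaken ``sandwich'' picture; nothing turns. The correct statement is simply: consecutive intermediate slices have \emph{equal} normal fans, and each end slice $Q_1=R_0$, $Q_2=R_k$ is a weak Minkowski summand of its intermediate neighbour. With that in hand your bookkeeping (translate each slab to a Cayley sum, apply \Cref{thm:cayley}, take the union of the covers) is fine and matches the paper.

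One smaller point you should also firm up: the propagation of integrality from $R_1$ to all $R_i$. The clean argument (the paper's) is that the hypothesis forces every edge of $P$ of the form $[u\times\{0\},\,v\times\{k\}]$ to contain a lattice point at height $1$, hence its primitive direction has last coordinate $1$, hence it contains lattice points at every height $i$; since the vertices of $R_i$ lie on such edges (or on $Q_1$, $Q_2$), each $R_i$ is a lattice polygon. Your ``sandwiched\dots alternatively the integrality propagates'' does not yet constitute an argument.
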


\begin{proof}
The condition that $P\cap(\R^2\times\{1\})$ is a lattice polygon implies the same for $P\cap(\R^2\times\{i\})$, for every $i$. 
Indeed, the condition implies that every edge of $\cayley(P,Q)$ of the form $[u\times \{0\}, v\times \{k\}]$ has a lattice point in $\R^2\times\{i\}$, and hence it has a lattice point in $P\cap(\R^2\times\{i\})$, for every $i$.

Observe that for every $i\in \{1,\dots,k-1\}$ the intersection $P\cap(\R^2\times\{i\})$ has the same normal fan as $Q_1+Q_2$. Thus, each slice
\[
P \cap (\R^2\times[i-1,i])
\]
is a Cayley polytope. For $i\in\{2,\dots,k-1\}$,  both bases have the same normal fan (and therefore each is a weak Minkowski summand of the other); for $i\in \{1,k\}$ one base is a weak Minkowski summand of the other. We can therefore apply \Cref{thm:cayley} to each slice and combine the covers thus obtained to get a unimodular cover of $P$.
\end{proof}

\begin{proof}[Proof of \Cref{coro:prismatoid,coro:width1}]
In both cases the polytope under study satisfies the hypotheses of \Cref{prop:prismatoid}: in \Cref{coro:prismatoid}, the smoothness of the prismatoid implies that every edge of the form $[u\times \{0\}, v\times \{k\}]$ has lattice points in all slices. In \Cref{coro:width1}, since $P$ has width one, $P\cong \cayley(Q_1, Q_2)$ for some $Q_1$ and $Q_2$. Hence,
\[
kP \cap(\R^2\times\{1\}) = (k-1)Q_1 + Q_2.
\qedhere
\] 
\end{proof}

\section{Proof of \Cref{lemma:cayley}}
\label{sec:the_lemma}
Let $f_q$ be the primitive lattice functional constant on $q$ and $f_p$ the one constant on $p$. We assume that $f_q(p_1) < f_q(p_2)$ and $f_p(q_1) < f_p(q_2)$.

Observe that in the strip $q +\vecline p$, there is a unique lattice point on the line $f_q(x)=-1$;  indeed, since $q$ is primitive, the only way that in the strip there could be two lattice points on $f_q(x)=-1$ is if they were on the boundary of the strip, which would however imply that $p+q$ is a unimodular paralellogram, against our assumptions.
Since translating the polytopes by lattice vectors will not result in any loss of generality, we can assume that $p_1$ is that unique lattice point. That is, $f_q(p_1)=-1$, or equivalently, the triangle $\conv(q_1, q_2, p_1)$ is unimodular. Similarly, the unique lattice point in the strip on the line $f_q(x)=1$ is then $q_1+q_2 -p_1$.

We let $H_1=\{f_q(x) \leq 0\}$ and $H_2=\{f_q(x) \geq 0\}$; similarly let $V_1=\{f_p(x) \leq 0\}$ and $V_2=\{f_p(x) \geq 0\}$. 
In the figures, we draw $p$ as a vertical segment and $q$ as a horizontal one, so that $H_i \cap V_j$ are the four quadrants. 
See \Cref{fig:setup}.
\begin{figure}[htb]
\includegraphics[scale=.3]{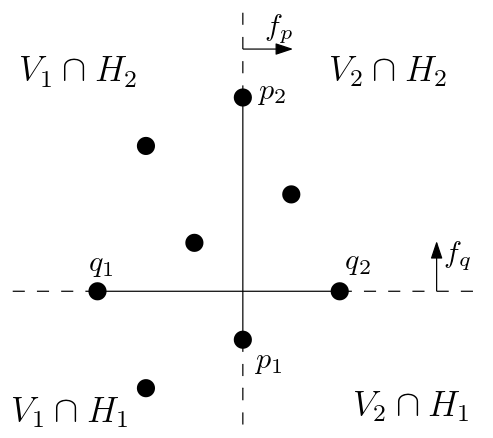}
\caption{Setup for the proof of \Cref{lemma:cayley}}
\label{fig:setup}
\end{figure}

Let $w=\area(p+q) \geq 2$, where $\area$ denotes the area normalized to a fundamental domain. Then:
\[
w=\width_{f_q}(p + \vecline q )=\width_{f_q}(p)=\width_{f_p}(q)=\width_{f_p}(q +\vecline p ). 
\]

\begin{proof}[Proof of \Cref{lemma:cayley}]
Suppose by contradiction that there is no lattice point as described in the lemma. In particular, no lattice point on the boundary of $Q$ can be in the interior of the strip $q + \vecline p$.  Thus the boundary of $Q$ contains two primitive segments which each have one vertex on each side of the strip $q + \vecline p$; we will call these  $b=[b_1, b_2], t=[t_1, t_2]$, with $b$ and $t$ crossing the strip in $H_1$ and $H_2$ respectively and the convention that $f_p(b_2) >f_p(b_1)$ and $f_p(t_2) >f_p(t_1)$. This readily implies 
\begin{gather}
\label{eq:widthq}
\begin{array}{cc}
f_p(t_1) \leq f_p(q_1), &
f_p(t_2) \geq f_p(q_2), \\
f_p(b_1) \leq f_p(q_1), &
f_p(b_2) \geq f_p(q_2).
\end{array}
\end{gather}

The same holds for $P$ and the strip $p+\vecline q$, and we call the segments $\ell=[l_1, l_2]$ and $r=[r_1, r_2]$, with $\ell$ and $r$ crossing the strip $p + \vecline q$  in $V_1$ and $ V_2$ respectively. The only difference is that in the case that $P$ is one dimensional we have $\ell=r=p$.  Again we have
\begin{gather}
\label{eq:widthp}
\begin{array}{cc}
f_q(l_1) \leq f_q(p_1), &
f_q(l_2) \geq f_q(p_2), \\
f_q(r_1) \leq f_q(p_1),&
f_q(r_2) \geq f_q(p_2).
\end{array}
\end{gather}

Observe that a priori one of $l$ and $r$ can coincide with $p$, if this is on the boundary of $P$, and similarly one of $t,b$ might be $q$, if this is on the boundary of $Q$.

\begin{claim}
The following inequalities hold, 
\begin{align*}
\width_{f_q}(\ell) , 
\width_{f_q}(r) ,
\width_{f_p}(t) ,
\width_{f_p}(b) \geq w.
\end{align*}
Each inequality is strict, unless the segment in question coincides with $p$ or $q$.
\end{claim}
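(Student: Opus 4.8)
### Proof plan for the Claim

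The plan is to prove the four inequalities by relating the widths of $\ell, r, t, b$ to the width $w$ via the \emph{weak Minkowski summand} hypothesis, together with the defining inequalities \eqref{eq:widthq} and \eqref{eq:widthp}. First I would treat the two segments $t$ and $b$ lying on the boundary of $Q$. The key observation is that $\width_{f_p}(q) = w$ by construction, and $q \subset Q$; so if I can show that $\width_{f_p}(b) \ge \width_{f_p}(q)$ and likewise for $t$, I am done. This is where \eqref{eq:widthq} enters directly: the inequalities $f_p(b_1) \le f_p(q_1)$ and $f_p(b_2) \ge f_p(q_2)$ give $\width_{f_p}(b) = f_p(b_2) - f_p(b_1) \ge f_p(q_2) - f_p(q_1) = \width_{f_p}(q) = w$, and symmetrically for $t$. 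The inequality is strict precisely when one of the two containments $f_p(b_1) \le f_p(q_1)$, $f_p(b_2) \ge f_p(q_2)$ is strict, i.e.\ unless $b$ spans exactly the same $f_p$-range as $q$; combined with $b$ being a boundary edge crossing the strip, that forces $b = q$ (and similarly $t = q$). So the bounds for $t, b$ are essentially immediate from the contradiction hypothesis.

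The bounds for $\ell$ and $r$ are the analogous statement for $P$: by \eqref{eq:widthp}, $\width_{f_q}(\ell) = f_q(l_2) - f_q(l_1) \ge f_q(p_2) - f_q(p_1) = \width_{f_q}(p) = w$, and likewise $\width_{f_q}(r) \ge w$, with equality only if $\ell$ (resp.\ $r$) spans the same $f_q$-range as $p$, forcing $\ell = p$ (resp.\ $r = p$) since $\ell, r$ are boundary edges of $P$ crossing the strip $p + \vecline q$ — or, in the degenerate case $\dim P = 1$, we simply have $\ell = r = p$ and equality holds, consistent with the "unless" clause. The one point that needs the weak Minkowski summand hypothesis is the identity $\width_{f_q}(p) = \width_{f_p}(q) = w$: this is exactly the chain of equalities recorded just before the proof, $w = \width_{f_q}(p) = \width_{f_p}(q)$, which holds because $P$ being a weak Minkowski summand of $Q$ forces the edges $p \subset P$ and $q \subset Q$ to have matching lattice widths in the complementary directions (equivalently, $p + q$ has normalized area $w$ in both "slicings").

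The main obstacle — really the only subtlety — is justifying the strictness clause cleanly: one must argue that a boundary segment of $Q$ which crosses the strip $q + \vecline p$ and has $\width_{f_p} = w$ must actually equal $q$. Here I would use that $q$ itself is, by the normalization set up at the start of \Cref{sec:the_lemma}, a primitive segment with $\width_{f_p}(q) = w$ whose endpoints lie on the two boundary lines $f_q(x) = 0$ of the strip's interior closure (or rather, that $q$ realizes the full width and the strip is "tight" around it); if another boundary edge $b$ had the same $f_p$-width it would have to be parallel to $q$ and, being on $\partial Q$ on the same side, coincide with it. I expect this to be a short paragraph rather than a computation. Everything else reduces to reading off \eqref{eq:widthq}, \eqref{eq:widthp}, and the width identities already displayed.
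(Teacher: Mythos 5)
Your derivation of the non-strict inequalities matches the paper's: each bound $\geq w$ is read off from \eqref{eq:widthq} and \eqref{eq:widthp} together with $\width_{f_q}(p)=\width_{f_p}(q)=w$. (One small correction: that identity is pure lattice geometry --- both quantities equal $|\det(\vec{p},\vec{q})|=\area(p+q)$ for primitive segments --- and does not use the weak Minkowski summand hypothesis at all.)

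The strictness clause, which you yourself flag as the only subtlety, is where there is a genuine gap. You argue that a boundary segment $b\neq q$ of $Q$ crossing the strip with $\width_{f_p}(b)=w$ ``would have to be parallel to $q$'' and hence coincide with it. But having the same width with respect to $f_p$ does not force parallelism: with $\vec{p}=(1,2)$, $\vec{q}=(1,0)$, $w=2$ and $f_p(x,y)=2x-y$, the primitive segments $[(0,0),(1,0)]$ and $[(1,2),(0,-2)]$ both have $f_p$-width $2$ and are not parallel. So equality of widths alone cannot force $b=q$; indeed the strictness assertion is simply false without invoking the standing reductio hypothesis of \Cref{lemma:cayley}, and your sketch uses neither that hypothesis nor the condition $w\geq 2$ --- a sign the argument cannot be completed as stated. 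The paper's proof of strictness is different and does use both: if, say, $\width_{f_q}(\ell)=w$ and $\ell\neq p$, then both endpoints of $\ell$ lie on the two boundary lines of the strip $p+\vecline q$, and one of them, say $l_1$, differs from the corresponding endpoint of $p$. Then $[p_1,l_1]$ is a lattice segment of positive lattice length parallel to $q$, and the triangle $\conv(p_2,p_1,l_1)\subset P$ has $f_q$-width $w\geq 2$, so it contains a lattice point at a strictly intermediate $f_q$-level, i.e.\ a lattice point of $((p_1,p_2)+\vecline q)\cap P$ --- contradicting the assumption that no such point exists. You would need to replace your parallelism argument with something of this kind.
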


\begin{proof}
The inequality $\geq w$ follows in each case from \eqref{eq:widthp} and \eqref{eq:widthq}.

If one of the inequalities, say the one for $\ell$, is not strict, then $\ell$ has one endpoint on each of the boundary lines of $(p + \vecline q)$. Unless $\ell = p$, one of the endpoints of $\ell$ is not an endpoint of $p$, say $l_1 \neq p_1$. Thus the triangle $T=\conv(p_2, p_1, l_1)$ is contained in $P$ and its edge $[p_1, l_1]$ is an integer dilation of $q$. Since $\width_{f_q}(T) =w \geq 2$, $T$ must contain a lattice point in the interior of the strip.
\end{proof}

%

\begin{claim}
\label{claim:b_and_t}
$f_q(b_2-b_1)$ and $f_q(t_2 - t_1)$ are non-zero and have the same sign. That is, $f_q$ achieves its maximum over $b$ and over $t$ on the same halfplane $V_1$ or $V_2$.
\end{claim}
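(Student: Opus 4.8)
The plan is to argue by contradiction, assuming $f_q(b_2-b_1)$ and $f_q(t_2-t_1)$ are not both nonzero of the same sign. Since everything in sight is symmetric under replacing $f_p$ by $-f_p$ or $f_q$ by $-f_q$ — which interchanges $V_1\leftrightarrow V_2$, resp.\ $H_1\leftrightarrow H_2$ and $b\leftrightarrow t$, and swaps the endpoint labellings — I would first use these symmetries to reduce to one of two cases: (a) $f_q$ is constant on $b$, equivalently $\vec b\parallel\vec q$ (this absorbs every case in which one of the two quantities vanishes); or (b) $f_q(b_2-b_1)>0$ while $f_q(t_2-t_1)<0$, i.e.\ $b$ and $t$ cross the strip $q+\vecline p$ tilting in opposite directions.

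In case (a): since $b$ is primitive and $\vec q$ is the primitive lattice vector in the direction of $\vec b$, the endpoint conventions force $\vec b=\vec q$, hence $\width_{f_p}(b)=w$. By the previous Claim equality forces $b$ to coincide with $q$ (it cannot coincide with $p$, which is parallel to the strip $q+\vecline p$). So either $q\in\partial Q$ — a degenerate configuration handled separately — or $b\ne q$, in which case I reach a contradiction directly: then $b=q+k\vec p$ for some integer $k\ne 0$, a lattice translate of $q$ along the primitive vector $\vec p$, and $\conv(b\cup q)\subset Q$ is, in the coordinates $(f_p,f_q)$, an axis-parallel rectangle spanning the full width $w$ of the strip and of height $|k|w\ge w$ in the $f_q$-direction. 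Using that $\vec q$ is primitive, on each lattice line $f_p=s$ with $f_p(q_1)<s<f_p(q_2)$ the lattice points avoid the residue of $f_q(q_1)$ modulo $w$, so exactly one of them lies strictly inside the rectangle, giving a lattice point of $Q$ in $(q_1,q_2)+\vecline p$ and contradicting the standing hypothesis in the proof of \Cref{lemma:cayley}.

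Case (b) is where I expect the real work. Here $Q\cap\big((q_1,q_2)+\vecline p\big)$ is a trapezoid bounded below by $b$ and above by $t$ which is wide along one boundary line of the strip and narrow along the other, and the elementary rectangle argument of case (a) no longer exhibits a forbidden lattice point. To get a contradiction one must bring in the hypotheses not yet used: the bounds $\width_{f_q}(\ell),\width_{f_q}(r)\ge w$ on the analogous crossing edges $\ell,r$ of $P$ (again from the previous Claim) together with the fact that $P$ is a weak Minkowski summand of $Q$, so that $\vec\ell$ and $\vec r$ are edge directions of $Q$. The idea is that opposite tilting of $b$ and $t$ pinches $Q$ too much near one end of the strip to be simultaneously compatible with $Q$ containing a lattice translate of the non-unimodular parallelogram $p+q$ and with $Q$ realizing the edge directions forced by $P$; turning this into an actual contradiction — most plausibly by again producing a lattice point of $Q$ strictly inside $(q_1,q_2)+\vecline p$, or of $P$ strictly inside $(p_1,p_2)+\vecline q$ — is the delicate step, and is where I expect the hypothesis $w\ge 2$ and the interaction between the two strips to be essential.
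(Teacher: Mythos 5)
Your outline identifies the right ingredients, but in the main case it does not contain a proof: in your case (b) --- which is the whole content of the claim --- you stop at ``turning this into an actual contradiction \ldots is the delicate step'' and never produce the contradiction. The missing step is exactly what the paper supplies, and it is short. Assume (after your symmetry reductions) that $f_q$ attains its maximum over $t$ at $t_1\in V_1$ and over $b$ at $b_2\in V_2$, allowing non-strict maxima so that the degenerate cases are absorbed rather than split off. The segment $Q\cap\{f_p=0\}$ lies on the lattice line through $p_1$ and $p_2$, contains the point of $q$ at $f_q=0$, and by the standing hypothesis must avoid the lattice points of that line, which sit at $f_q\in -1+w\Z$; hence $Q\cap\{f_p=0\}\subset\{-1<f_q<w-1\}$. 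Since the upper (resp.\ lower) boundary of $Q$ crosses $\{f_p=0\}$ along $t$ (resp.\ $b$) with non-positive (resp.\ non-negative) slope, convexity propagates this bound to all of $V_2$, so $Q\cap V_2\subset\{-1<f_q<w-1\}$, an \emph{open} strip of $f_q$-width $w$. Because the normal fan of $Q$ refines that of $P$, $Q$ has an edge with the same outer normal as the edge of $P$ containing $r$; that edge sits on the part of $\partial Q$ facing into $V_2$, hence in $Q\cap V_2$, and being a lattice edge parallel to the primitive segment $r$ it contains a translate of $r$ and so has $f_q$-width at least $\width_{f_q}(r)\ge w$. This is impossible inside an open strip of width $w$. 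Your ``pinching'' intuition is precisely this picture, but without some version of this computation the claim remains unproven.

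A second, smaller gap: in your case (a) you reduce to $b=q$ (equivalently $q\subset\partial Q$) and then declare it ``a degenerate configuration handled separately'' without handling it. It cannot be set aside: the assertion $f_q(b_2-b_1)\neq 0$ is exactly what fails when $b=q$, so ruling this out is part of what the claim asserts (and is what the paper later uses to conclude that $q$ is not on the boundary of $Q$). The paper avoids a separate case by phrasing the contradiction hypothesis in terms of where the maxima of $f_q$ over $b$ and $t$ are attained: when $f_q$ is constant on $b$ its maximum is attained in both half-planes, so after possibly reflecting $f_p\mapsto -f_p$ one lands in the pinched configuration above and the same one-line argument applies. (Your rectangle argument for $b=q+k\vec p$ with $k\neq 0$ is correct but treats a situation already excluded by the strict inequality of the preceding Claim.)
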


%

\begin{proof}
Suppose by contradiction that the maximum of $f_q$ on $t$  lies in $V_1$ and that the maximum on $b$ lies in $V_2$. 

Then $Q \cap V_2$ is contained in the open strip $\{-1<f_q(x)<w-1\}$, of width $w$. This cannot contain a translated copy of $r$, since $\width_{f_q}(r) \geq w$, see \Cref{fig:claim2}. This is a contradiction, since $P$ is a Minkowski summand of $Q$ and therefore $Q$ must have an edge parallel to $t$.
\end{proof}

\begin{figure}[htb]
\scalebox{.75}{\input{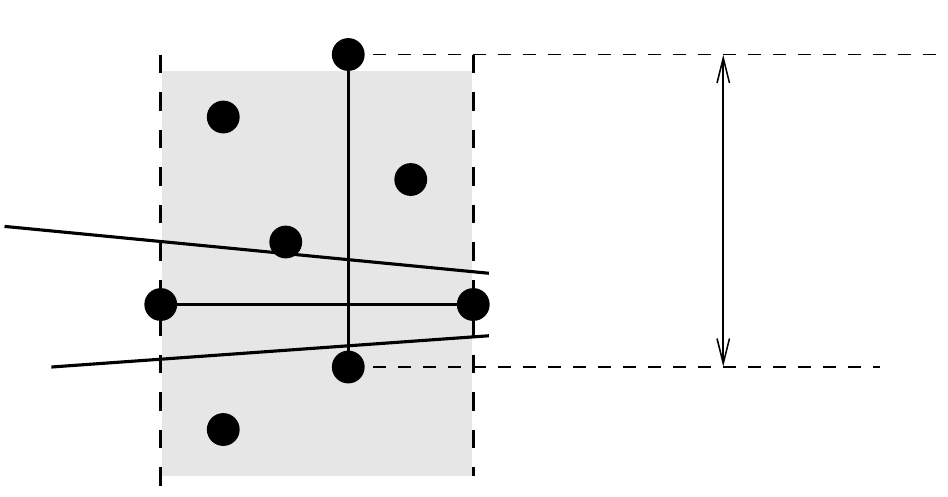_t}}
\caption{Illustration of the proof of \Cref{claim:b_and_t}}
\label{fig:claim2}
\end{figure}

We assume w.l.o.g. that the maximum on $t$ (and hence on $b$) is achieved in $V_2$, that is to say, $f_p$ and $f_q$ increase in the same direction along $t$ (and hence along $b$). 

\begin{claim}
\label{claim:r}
Assume w.l.o.g.~that $b$ and $t$ either are parallel or their affine spans cross in $V_2$. Then, 
\begin{enumerate}
\item The intersection of $Q$ with any line parallel to $p$ in $V_2$ has width w.r.t. $f_q$ strictly smaller than $w$.
\item $f_p(r_2) > f_p(r_1)$, that is, $f_p$ achieves its maximum over $r$ in $H_2$.
\end{enumerate}
\end{claim}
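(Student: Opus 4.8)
\emph{Strategy.} I would prove part (1) directly, combining a lattice‑point count with convexity, and then deduce part (2) from part (1) by contradiction, using that the normal fan of $Q$ refines that of $P$.

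\emph{Part (1).} Recall the normalizations $f_p(p)=0$, $f_q(q)=0$, $f_q(p_1)=-1$, $f_q(p_2)=w-1$, and take $f_p(q_1)=-a$, $f_p(q_2)=w-a$, where $a\in\{1,\dots,w-1\}$ (the values $a=0$, $a=w$ are excluded, since they would put a lattice point of the strip $q+\vecline p$ on the line $\{f_q=-1\}$ other than $p_1$, making $p+q$ unimodular). The key observation is that, under the contradiction hypothesis, $Q$ has no lattice point on the line $\{f_p=0\}$: since $0$ is an integer with $-a<0<w-a$, any such point would lie in $\bigl((q_1,q_2)+\vecline p\bigr)\cap Q$. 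As $p$ is primitive, the consecutive lattice points of the line $\{f_p=0\}$ differ by $\vec p$, hence have $f_q$-values $w$ apart; so the lattice‑point‑free segment $Q\cap\{f_p=0\}$ has $f_q$-width $<w$. Next, $b$ and $t$ being edges of $Q$ which by \eqref{eq:widthq} span $f_p\in[-a,w-a]$ (in particular they are not parallel to $p$), $Q$ is contained in the region $R$ lying above the line through $b$ and below the line through $t$; and over $f_p=0$ the lower and upper boundaries of $Q$ are precisely $b$ and $t$, so $\width_{f_q}(R\cap\{f_p=0\})=\width_{f_q}(Q\cap\{f_p=0\})<w$. Finally, $c\mapsto\width_{f_q}(R\cap\{f_p=c\})$ is affine (constant if $b\parallel t$), positive at $c=0$, and vanishes at the $f_p$-coordinate $x_z$ of the intersection of the two lines; the hypothesis that this intersection lies in $V_2$ means $x_z\ge 0$, so the function is non‑increasing on $[0,x_z]$ while $R\cap\{f_p=c\}=\emptyset$ for $c>x_z$. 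Hence for all $c\ge 0$,
\[\width_{f_q}\bigl(Q\cap\{f_p=c\}\bigr)\le\width_{f_q}\bigl(R\cap\{f_p=c\}\bigr)\le\width_{f_q}\bigl(Q\cap\{f_p=0\}\bigr)<w.\]

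\emph{Part (2).} Suppose $f_p(r_2)\le f_p(r_1)$; if equality holds then $r$ is parallel to $p$, $f_p$ is constant on $r$, and the reformulation ``$f_p$ attains its maximum over $r$ in $H_2$'' is automatic (the truly degenerate case $r=p$, occurring only when $P$ is one‑dimensional, is checked directly), so assume $f_p(r_2)<f_p(r_1)$. Then $f_q(r_2)-f_q(r_1)\ge(w-1)-(-1)=w>0$ and $f_p(r_2)-f_p(r_1)<0$, so the outer normal $n_r$ of the edge $r$ of $P$ has both coordinates (in the $f_p$- and $f_q$-directions) strictly positive: the $f_p$-one because $r$ bounds $P$ on the $V_2$ side of it (e.g.\ $p_1$ lies weakly on the other side), the $f_q$-one because $f_p$ decreases from $r_1$ to $r_2$. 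Since the normal fan of $Q$ refines that of $P$, some edge $e$ of $Q$ has outer normal a positive multiple of $n_r$. Using that by \Cref{claim:b_and_t} the edges $b$ and $t$ have outer normals in the ``fourth'' and ``second'' quadrants (again with respect to $f_p,f_q$), and that outer normals rotate monotonically along $\partial Q$, one sees that $e$ lies on the boundary arc strictly between $b$ and $t$ — the upper‑right part of $\partial Q$; combining this with $f_p(t_2)\ge f_p(q_2)=w-a$ and the monotonicity of $f_p$ along that arc gives $f_p\ge w-a>0$ everywhere on $e$. The contradiction with part (1) is then to be extracted by slicing $Q$ by an appropriate line $\{f_p=c\}$ with $c\ge 0$: on it the upper boundary of $Q$ is realised on $e$ (or on an edge between $e$ and the top vertex of $Q$, which also lies in $V_2$), while the lower boundary is still on $b$ and lies below $\{f_q=0\}$ for $c\le w-a$; matching these produces a slice of $f_q$-width $\ge w$.

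\emph{Expected main obstacle.} The delicate point is this last step of part (2): turning ``$e$ lies on the upper‑right arc between $b$ and $t$'' into a clean numerical contradiction with part (1). The trouble is that the slicing line through the relative interior of $e$ has $f_p>w-a$, a range in which neither ``the lower boundary of $Q$ is the edge $b$'' nor ``the lower boundary lies below $\{f_q=0\}$'' is automatic; one must either choose the line through the top vertex of $Q$ and keep track of which lower‑boundary edge it meets, or bring in the quantitative width inequalities proved earlier on this page ($\width_{f_q}(r)\ge w$, strict unless $r=p$) to control the $f_q$-length of $e$. Part (1), by contrast, is routine bookkeeping once the ``$Q$ has no lattice point on $\{f_p=0\}$'' remark is in place.
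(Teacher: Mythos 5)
Your part (1) is correct and is essentially the paper's argument: the central slice $Q\cap\{f_p=0\}$ has $f_q$-width $<w$ (you obtain this from lattice-point-freeness and the spacing $w$ of lattice points on that line; the paper obtains it from the fact that $b$ and $t$ must meet the segment $p$ strictly between $p_1$ and $p_2$), and the wedge $R$ between $\langle b\rangle$ and $\langle t\rangle$, with apex in $V_2$, has slices of non-increasing $f_q$-width for $c\ge 0$. Part (2), however, has two genuine gaps. First, the case $f_p(r_1)=f_p(r_2)$ cannot be waved away: the statement to be proved is the strict inequality, which is \emph{false} when $f_p$ is constant on $r$, so this case must be shown not to occur under the standing contradiction hypothesis; moreover it is not confined to one-dimensional $P$ (the paper explicitly allows $r=p$ whenever $p$ lies on $\partial P$). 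What your ``checked directly'' omits is that $r\parallel p$ forces $r=p$ (by the strictness in the first width Claim), whence $Q$ must have an edge parallel to $p$ with outer normal in the positive $f_p$-direction; that edge lies in $\{f_p\ge w-a\}\subset V_2$ and contains a primitive lattice segment parallel to $p$, of $f_q$-width exactly $w$, contradicting part (1). Second, and more seriously, the final contradiction in the main case is never actually derived: you say it ``is to be extracted by slicing $Q$'' and then, in your closing paragraph, correctly diagnose that neither of the two properties your slicing needs holds on the range $f_p>w-a$ where the edge $e$ lives. Identifying the obstacle is not overcoming it; as written, part (2) is a plan, not a proof.

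The missing idea, which is how the paper closes the argument, is to compare against the supporting line $\langle t\rangle$ (equivalently, the wedge $R$) rather than against $Q$ itself; this eliminates all bookkeeping about which edge realises the lower boundary. Concretely, the edge $e$ contains a primitive translate $r'=[r_1',r_2']$ of $r$. Since the vector $r_2'-r_1'$ has $f_q$ increasing and $f_p$ non-increasing, and $Q$ lies below $\langle t\rangle$, the segment $r'$ is trapped in the triangle $\{f_q\ge f_q(r_1')\}\cap\{f_p\le f_p(r_1')\}\cap\{\text{below }\langle t\rangle\}$. Because $f_p$ and $f_q$ increase in the same direction along $t$, the $f_q$-width of this triangle equals the length of the vertical segment from $r_1'$ up to $\langle t\rangle$, which is contained in $R\cap\{f_p=f_p(r_1')\}$ with $f_p(r_1')\ge f_p(t_2)>0$, and hence is $<w$ by the part (1) computation (which really bounds the slices of $R$, not only of $Q$). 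This contradicts $\width_{f_q}(r')=\width_{f_q}(r)\ge w$ and simultaneously disposes of the degenerate case above. Your alternative suggestion of controlling the $f_q$-length of $e$ via the first width Claim might also be made to work, but you have not carried it out.
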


\begin{proof}
Both $t$ and $b$ must intersect $p$, otherwise $p_1$ or $p_2$ are the lattice points we are looking for in $Q$.  Their intersections with $p$ are thus endpoints of a segment of width w.r.t $f_q$ less than $w$, the width of $p$. Since $t$ and $b$ cross in $V_2$, the same is true for any segment parallel to $p$ contained in $Q \cap V_2$.  

\begin{figure}[htb]
\scalebox{.75}{\input{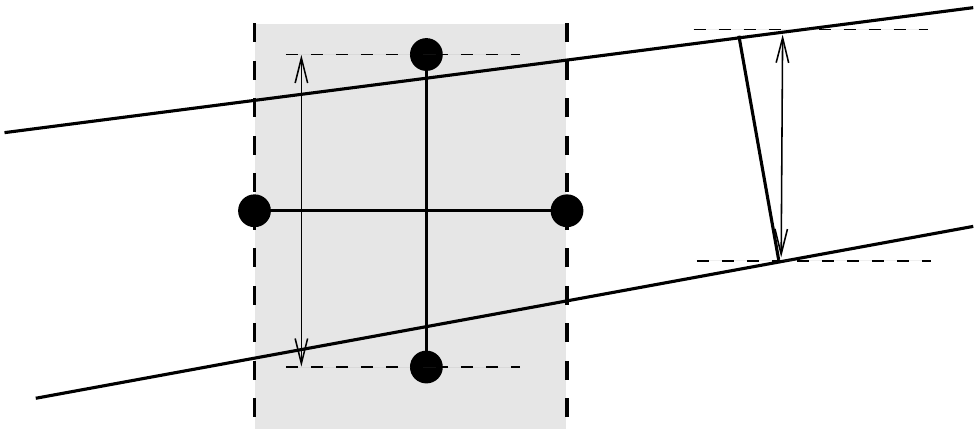_t}}
\caption{Illustration of the proof of \Cref{claim:r}}
\label{fig:claim3}
\end{figure}

For part (b), If $f_p(r_2) \leq f_p(r_1)$, it would be impossible to fit a translated copy $r'$ of $r$ in the correct side of $Q$: $r$ would need to lie inside the triangle delimited by the affine line $\langle t \rangle$ and the inequalities $f_q(x) \geq f_q(r_1)$, $f_p(x) \leq f_p(r_1)$. However, this triangular region has width less than $w$  w.r.t. $f_q$, by combining part (a) with the fact that $f_p$ and $f_q$ increase in the same direction along $t$, see \Cref{fig:claim3}. 
\end{proof}

The last two claims can be summarized as saying that in the pictures $b$, $t$ and $r$ have positive slope. Observe that this implies that $q$ is not in the boundary of $Q$ and $p \neq r$, so both $P$ and $Q$ are full dimensional.

%
%
%

Let $g$ be the primitive lattice functional constant on $[p_1, q_2]$ (and therefore constant also on $[q_1, q_1+q_2-p_1]$). By the assumption on $p_1$, the values of $g$ on these segments differer by $1$. We choose the sign of $g$ so that 
\[
g([p_1, q_2])= g( [q_1, q_1+q_2-p_1]) +1. 
\]

\begin{claim}
\label{claim:g}
$g(t_1) > g(t_2)$, $g(b_1) > g(b_2)$, and $g(r_1) < g(r_2)$.
\end{claim}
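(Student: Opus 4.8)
The plan is to set up coordinates adapted to the unimodular triangle $\conv(q_1,q_2,p_1)$ so that the functionals $f_q$, $f_p$ and $g$ all have an explicit description, and then read off the sign statements from the geometric constraints already established in \Cref{claim:b_and_t} and \Cref{claim:r}. Concretely, since $\conv(q_1,q_2,p_1)$ is unimodular we may take $q_1=(0,0)$, $q_2=(1,0)$, $p_1=(0,-1)$ after an $SL(2,\Z)$ change of coordinates; then $f_q(x,y)=-y$ (so $f_q(p_1)=1>0=f_q(q_i)$, matching the conventions via a possible relabeling), $f_p$ is proportional to the coordinate in which $q_1,q_2$ differ, i.e. $f_p(x,y)=x$ up to sign, and $g$ is the primitive functional constant on $[p_1,q_2]=[(0,-1),(1,0)]$, hence $g(x,y)=x-y$ after fixing the sign by $g([p_1,q_2])=1$ and $g([q_1,q_1+q_2-p_1])=g(\{x-y=0\})=0$. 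So $g=f_p+f_q$ in these coordinates.

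With this identification the three inequalities become statements about how $f_p+f_q$ behaves on $t$, $b$, $r$, which I can deduce from the sign information we already have: by \Cref{claim:b_and_t} (and the normalization chosen right after it) $f_p$ increases along $t$ and along $b$ in the direction of increasing $f_q$ — wait, more precisely $t$ and $b$ live in $H_2=\{f_q\ge 0\}$ and $H_1=\{f_q\le 0\}$ respectively and cross the strip, so moving along each of them from the $V_1$-side to the $V_2$-side increases $f_p$; combined with \Cref{claim:r}(a), which says $Q\cap V_2$ is thin with respect to $f_q$, one gets that along $t$ and $b$ the functional $f_q$ must be \emph{decreasing} in the direction of increasing $f_p$ — otherwise $Q\cap V_2$ would be forced to be wide in the $f_q$ direction. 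Hence $f_q(t_2)<f_q(t_1)$ and $f_q(b_2)<f_q(b_1)$ while $f_p(t_2)>f_p(t_1)$ and $f_p(b_2)>f_p(b_1)$. The key point is to show the $f_p$-gain dominates the $f_q$-loss, so that $g=f_p+f_q$ still decreases: this is where the quantitative content of \Cref{claim:r}(a) enters — the $f_q$-width of $Q\cap V_2$ is at most $w-1$ (it is a positive integer strictly less than $w$), while $\width_{f_p}(t),\width_{f_p}(b)\ge w$ by the first claim of this section. Since $g=f_p+f_q$ and the $f_p$-part changes by at least $w$ while the $f_q$-part changes by at most $w-1$ in the opposite direction, $g(t_1)>g(t_2)$ and $g(b_1)>g(b_2)$. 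For $r$ the argument is the mirror image: by \Cref{claim:r}(b) $f_p$ increases from $r_1$ to $r_2$ and $r$ crosses the strip $p+\vecline q$, so $f_q$ increases by at least $w$ along $r$ from $r_1$ to $r_2$; meanwhile $\width_{f_p}(r)$, while $\ge w$, is controlled by the fact that $r\subset V_2$ and the same thinness estimate for $P$ (or trivially $r=p$ has $f_p$-width $0$) gives that the $f_p$-change is small enough — in any case non-negative and the $f_q$-change is positive — so $g(r_2)=f_p(r_2)+f_q(r_2)>f_p(r_1)+f_q(r_1)=g(r_1)$.

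The main obstacle I anticipate is making the ``domination'' estimates rigorous rather than merely plausible from the pictures: I need the precise bound that the $f_q$-width of $Q\cap V_2$ is at most $w-1$ (an integer strictly below $w$) together with $\width_{f_p}(t)\ge w$, and then argue that the vector $t_2-t_1$, which spans a primitive edge direction of $Q$, cannot have $g(t_2-t_1)\ge 0$; the cleanest way is to suppose $g(t_1)\le g(t_2)$ and derive, using $f_p(t_2)-f_p(t_1)\ge w$ and $f_q(t_2)-f_q(t_1)=-(f_p(t_2)-f_p(t_1))+(g(t_2)-g(t_1))\ge -(f_q\text{-width of }Q\cap V_2)$, a contradiction with $\width_{f_q}(Q\cap V_2)<w$. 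I would present the $t$ case in full and remark that $b$ is identical and $r$ is symmetric under the roles of $P\leftrightarrow Q$, $f_p\leftrightarrow f_q$, $H_i\leftrightarrow V_i$. A secondary subtlety is the degenerate case $P$ one-dimensional, i.e. $r=\ell=p$: then $g(r_2)-g(r_1)=g(p_2)-g(p_1)=f_q(p_2)-f_q(p_1)=w>0$ directly, so the claim holds trivially, and similarly if $b$ or $t$ equals $q$ one has $g(q_2)-g(q_1)=f_p(q_2)-f_p(q_1)=w>0$ but with the sign reversed relative to the claim — so I should double-check that the normalization after \Cref{claim:b_and_t} actually rules this out, which it does since that normalization forces $b,t$ to have the ``positive slope'' noted in the sentence ``$b$, $t$ and $r$ have positive slope'', hence $b,t\ne q$ and $p\ne r$.
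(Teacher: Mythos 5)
Your coordinate normalization is where the proof breaks down. After using the unimodularity of $\conv(q_1,q_2,p_1)$ to place $q_1=(0,0)$, $q_2=(1,0)$, $p_1=(0,-1)$, you have exhausted the $SL(2,\Z)$ freedom (the only lattice transformation fixing these three points is the identity), so you may \emph{not} additionally assume $f_p(x,y)=x$. In these coordinates $p_2-p_1$ is a primitive vector of $f_q$-width $w$, say $(c,w)$ with $\gcd(c,w)=1$, so $f_p=\pm(wx-cy)$; the choice $f_p=x$ would force $p_2-p_1=(0,\pm w)$, which is not primitive for $w\ge 2$. Consequently the identity $g=f_p+f_q$ on which your entire computation rests is false: one has instead $g=\tfrac1w\bigl(f_p-(w-c)f_q\bigr)$ (with $f_q=y$, $f_p=wx-cy$, $g=x-y$), and the sign of $g(t_2)-g(t_1)$ cannot be extracted from the crude bounds $\width_{f_p}(t)\ge w$ and $\width_{f_q}(Q\cap V_2)\le w-1$ alone, since it depends on $c$. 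Two further problems compound this. First, your assertion that $f_q$ decreases along $t$ and $b$ in the direction of increasing $f_p$ contradicts the normalization adopted immediately after \Cref{claim:b_and_t}, which states that $f_p$ and $f_q$ increase in the \emph{same} direction along $t$ and $b$ (the ``positive slope'' remark). Second, even granting both $g=f_p+f_q$ and your monotonicity claim, the arithmetic is backwards: if $f_p$ gains at least $w$ from $t_1$ to $t_2$ while $f_q$ loses at most $w-1$, then $f_p+f_q$ \emph{increases}, which would give $g(t_1)<g(t_2)$, not the asserted inequality. (A smaller slip of the same kind occurs in your degenerate-case discussion, where $g(q_2)-g(q_1)=1$, not $w$.)

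The paper's proof uses none of this quantitative bookkeeping. It relies on the one geometric fact your argument never invokes: since $Q$ contains no lattice point of the open strip, the edges $b$ and $t$ must separate $p_1$ and $q_1+q_2-p_1$ (the unique lattice points of the strip at levels $f_q=-1$ and $f_q=+1$) from $q_1,q_2$, and therefore their affine spans must cross prescribed edges of the fundamental parallelogram $\conv(q_1,p_1,q_2,q_1+q_2-p_1)$, on which the values of $g$ are pinned down; the inequalities then follow by comparing $g$ at the two crossing points, ordered along each segment by $f_p$. This separation input is where the emptiness hypothesis enters, and I do not see how to recover the claim from width estimates alone. If you want to pursue a coordinate computation, you must keep the parameter $c$ and reprove the separation statement in that language; as written, the proposal does not establish the claim.
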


\begin{proof}
Since $b$ and $t$ must respectively separate $p_1$ and $q_1+q_2-p_1$ from the other two vertices of the parallelogram $\conv(q_1, p_1, q_2, q_1+q_2-p_1)$, they must respectively intersect its (parallel) edges $[p_1, q_2]$ and $[q_1, q_1+q_2-p_1]$, which implies the stated inequalities for $b$ and $t$.
The same argument  applied to the parallelogram  $\conv(p_1, q_2, p_2, p_1+p_2-p_2)$, yields the inequalities for $\ell$ and $r$.
\end{proof}

\begin{figure}[htb]
\scalebox{.75}{\input{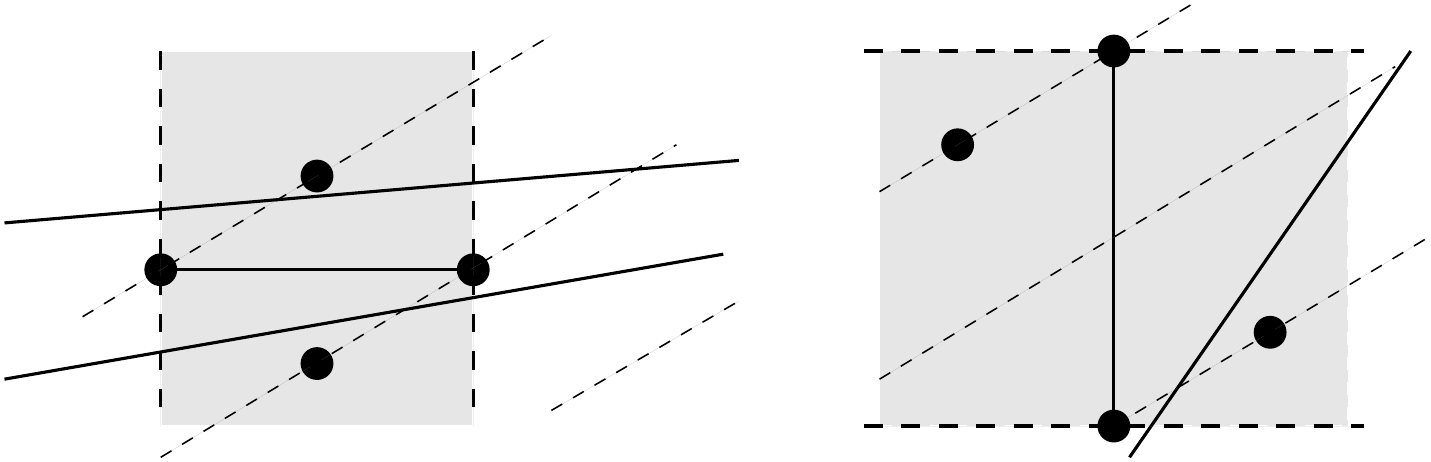_t}}
\caption{Illustration of the proof of \Cref{claim:g}}
\label{fig:claim4}
\end{figure}

We are now ready to show a contradiction. Since the normal fan of $Q$ refines that of $P$, $Q$ must have an edge $r'$ which is a translated copy of $r$. Let $r_1'$ and $r_2'$ be its endpoints. Now consider the lattice line $d$ through $r_1'$ parallel to  $[p_1, q_2]$, that is, $g$ is constant on $d$. Let $d'$ be the parallel line defined by $g(d')=g(d)+1$.

Consider the segment $s$ contained in $r_1' + \vecline p$ with endpoints $s_1=r_1'$ on $d$ and $s_2$ on $d'$.  
Since $t$ separates $q_1$ and $q_1+q_2-p_1$ and $g$ decreases from $t_1$ to $t_2$ (by \Cref{claim:g}), the inequality $g(x)< g(d')$ holds on $Q\cap V_2$,
and in particular for $r_2'$. Since $r_2'$ is a lattice point, $g(r_2')\leq g(d)= g(r_1')$, which contradicts \Cref{claim:g}).
\end{proof}

\end{document}